\newcommand\N{\mathbb{N}}
\newcommand\R{\mathbb{R}}
\newcommand\sset{\mathbb{S}}
\newcommand\Z{\mathbb{Z}}
\newcommand\p{\varphi}
\newcommand{\Wp}{{W_\p}}
\newcommand\distf{{\rm dist}_\p}
\newcommand{\HH}{{\mathcal H}}
\def\Om{{\Omega}}  
\def\div{\mathop{\mathrm{div}}}
\def\eps{\varepsilon}
\def\j{{j_{\rm max}}}
\def\1{{\bf 1}}
\newtheorem{theorem}{Theorem}[section]
\newtheorem{definition}[theorem]{Definition}
\newtheorem{lemma}[theorem]{Lemma}
\newtheorem{proposition}[theorem]{Proposition}
\newtheorem{corollary}[theorem]{Corollary}
\theoremstyle{remark}
\newtheorem{remark}[theorem]{Remark}
\numberwithin{equation}{section}
\title{Droplet condensation and isoperimetric towers}
\author{Matteo Novaga}
\address[Matteo Novaga]{Dipartimento di Matematica, Universit\`a di Padova, via Trieste 63, 35121 Padova, Italy}
\email{novaga@math.unipd.it}
\author{Andrei Sobolevski}
\address[Andrei Sobolevski]{Institute for Information Transmission Problems (Kharkevich
  Institute), 19 B. Karetny per., 127994 Moscow, Russia
  \and
National Research University Higher School of Economics, 20
Myasnitskaya st., Moscow 101000, Russia}
\email{sobolevski@iitp.ru}
\author{Eugene Stepanov}%
\address[Eugene Stepanov]{
St.Petersburg Branch
of the Steklov Mathematical Institute of the Russian Academy of Sciences,
Fontanka 27,
191023 St.Petersburg,
Russia
\and
Department of Mathematical Physics, Faculty of Mathematics and Mechanics,
St. Petersburg State University, Universitet\-skij pr.~28, Old Peterhof,
198504 St.Peters\-burg, Russia}
\email{stepanov.eugene@gmail.com}
\thanks{
The first author acknowledges partial support by the Fondazione CaRiPaRo Project
``Nonlinear Partial Differential Equations: models, analysis, and
control-theoretic problems.''
The second author was partially supported by Laboratory for Structural
Methods of Data Analysis in Predictive Modeling, MIPT, RF government
grant, ag. 11.G34.31.0073, and by RFBR grant 11-01-93106 CNRSL\_a.
The work of the third author was financed by GNAMPA, by RFBR grant 11-01-00825,
and by the project 2008K7Z249 ``Trasporto ottimo di massa,
disuguaglianze geometriche e funzionali e applicazioni'' financed by the
Italian Ministry of Research.
}
\date{}
\begin{document}

\begin{abstract}
  We consider a variational problem in a planar convex domain, motivated by statistical mechanics
  of crystal growth in a saturated solution. The minimizers are constructed explicitly and are completely characterized.
\end{abstract}

\maketitle

\section{Introduction}\label{sec_mcheeg_intro}

In understanding the physical phenomenon of droplet condensation or
crystal growth, the central issue is to explain how a particular
macroscopic shape of the growing droplet or crystal is determined by
microscopic interactions of its constituent paricles.

According to Gibbs' formulation of statistical mechanics, the
probability of a microscopic configuration~$\sigma$ is proportional to
$\exp(-\beta H(\sigma))$, where $\beta > 0$ is the inverse temperature
and $H(\cdot)$ is the Hamiltonian defining the energy of the system.
Therefore the most probable configurations are the ones with minimal
energy.  In the ``thermodynamical'' limit of a large number of
particles, this minimum becomes very sharp: the overall configuration
of the system settles, up to minute fluctuations, to a well-defined
deterministic structure.

It turns out that the microscopic laws of atomic
interactions give rise to a certain macroscopic quantity, the surface
tension, which determines the droplet shape via minimization of the
surface energy.  Phenomenology of surface tension has been proposed by
Gibbs in the late 1870's.  In an important contribution, G.~Wulff
suggested in 1900 that for a growing crystal, its equilibrium shape is
that of a ball in a metric generated by the surface tension (the
\emph{Wulff shape}).

It has been furthermore observed experimentally that flat facets of a
growing crystal may carry macroscopic but monomolecular ``islands'',
whose shape is also determined by the surface tension.  A mathematical
approach to explanation of this phenomenon has been developed by
S.~Shlosman and collaborators in a series of works \cite{Schonmann.R:1996,IoffeShlos08,IoffeShlos10},
building upon his earlier work with R.~L.~Dobrushin and R.~Kotecky \cite{DobrKotShlos92}.

A typical setting in this approach is represented by the following discrete model
of crystal growth, which is a variant of the Ising model: fix an open
domain $\Omega\subset \R^2$ of unit area and consider the
three-dimensional lattice obtained by intersecting the cylinder
$\Omega \times [-1, 1] \subset \R^3$ with $\frac 1N (\Z^3 + (0, 0,
\frac 12))$, where $N$ is a large integer parameter.  At each node~$t$
of this lattice there is a variable~$\sigma_t$ (the \emph{spin})
taking values $+1$ (interpreted as ``$t$ belongs to the free phase'')
and $-1$ (interpreted as ``$t$ belongs to the condensed phase'').  The
collection $\sigma = (\sigma_t)$ is called the microscopic
configuration of the system.

Fix now the Ising Hamiltonian $H(\sigma) = - \sum_{s, t\colon |s -
  t| = 1} \sigma_s \sigma_t$, which describes a ``ferromagnetic''
interaction between nearest neighbors (equal values have smaller
energy than opposite ones), and consider the canonical probability
distribution $p(\sigma) = \exp(-\beta H(\sigma))/Z$.  Here the
normalization coefficient $Z = \sum_\sigma \exp(-\beta H(\sigma))$ is
defined by summation over all configurations that satisfy the
so-called \emph{Dobrushin boundary condition}: spins at outermost nodes $(x, y,
z)$ of the lattice have values $+1$ if $z > 0$ and $-1$ if $z < 0$.

It turns out that in the limit of large~$N$ the main contribution to
probability comes from configurations where the lower and upper halves
of the lattice are filled respectively with $-1$'s and $+1$'s.  In
this equilibrium state, the numbers of $+1$'s and $-1$'s are
asymptotically equal, so that $S_N = \sum_t \sigma_t \sim 0$, and
fluctuations of the flat surface dividing the two phases are
logarithmic in~$N$.

A more interesting situation occurs when, in addition to the Dobrushin
boundary values, the system is conditioned to have macroscopically
more $-1$'s than $+1$'s:
\begin{displaymath}
  S_N = \sum_t \sigma_t = -m N^2
\end{displaymath}
with $m > 0$.  In this case, depending on the value of~$m$, the most
probable state of the system may feature one or more monomolecular
layers on top of the surface $z = 0$ in the box $\Om \times [-1, 1]$.
A detailed account of the observed equilibrium states as $m$ changes
can be found in~\cite{IoffeShlos10}.

As proved in~\cite{Schonmann.R:1996},
the behavior of this model
in the continuous limit $N \to \infty$ is closely related to the
following variational problem: given an open set $\Om\subset\R^n$ and a
value $m\in [0,+\infty)$, find
\begin{equation}\label{varpb}
  \min \left\{ \int_\Om\p^*(Du)\colon
    u\in BV(\R^n),\ \text{$u = 0$ on~$\R^n\setminus\Om$},\ u(\cdot)\in \N,\
    \int_\Om u\,dx = m \right\},
\end{equation}
where $\p^*$ is some given general norm on $\R^n$.  Of course, in the application to the Ising model
we are discussing here one has $n=2$, i.e.\ one works in the two-dimensional case; however the case of generic dimension
$n$ of the ambient space $\R^n$ also makes sense from the mathematical point of view.
The growth of a
droplet and formation of new layers of the solid is described by the
growth of profile $u$ as $m$ increases.

The norm $\p^*(\cdot)$ here is related to the surface tension as
follows.  The surface tension $\gamma^{3D}(\cdot)$ is a function defined
over $\sset^2$, the two-dimensional unit sphere in~$\R^3$, and
satisfying $\gamma^{3D}(\nu) \ge 0$ and $\gamma^{3D}(-\nu) = \gamma^{3D}(\nu)$ for
all $\nu \in \sset^2$.  The surface energy of a closed surface $M^2
\subset \R^3$ is defined to be
\begin{displaymath}
  H(M^2) = \int_{M^2} \gamma^{3D}(\nu_s)\, ds,
\end{displaymath}
where $\nu_s$ is the unit normal to~$M^2$ at~$s\in M^2$.  While $\gamma^{3D}$
defines the $3D$ shape of a crystal growing in space, the shape of
monolayers growing on facets is given by the restricted $2D$ surface
tension defined for $n \in \sset^1$ by
\begin{displaymath}
  \gamma^{2D}(\nu) = \frac \partial {\partial \nu} \gamma^{3D}\Bigr|_{\nu_s = (0, 0, 1)},
\end{displaymath}
where the derivatives are taken at the ``north pole'' $\nu_s = (0, 0, 1)
\in \sset^2$ along all tangents $\nu \in \sset^1$ to~$\sset^2$~\cite{IoffeShlos10}.
The function $\gamma^{2D}$ can then be extended to
the whole $\R^2$ by homogeneity of degree one, and $\p^*(\cdot)$ is
defined as the convex hull of thus defined~$\gamma^{2D}(\cdot)$.
However in the sequel $\p^*$ will be fixed, without any
assumptions of smoothness or strict convexity
(indeed one of the examples in Section~\ref{secexa} corresponds to a crystalline norm).

It is easy to see that the functional minimized in~\eqref{varpb} is
the one-dimensional surface energy for the restricted surface tension.
It turns out that minimization of this surface energy alone is
sufficient to reconstruct most of the physics of monomolecular layers
growth described in~\cite{IoffeShlos10}.
In particular, if $\p^*(\cdot)$ is the Euclidean norm and $\Om$ a
unit square, then as $m$ grows, the
first four mono\-mo\-lecular layers start as Wulff circles and then
develop into ``Wulff plaquettes'' while from the fifth
layer on all new layers appear as Wulff plaquettes identical to
the underlying layers (Section~\ref{secexa}).

In contrast, this simple variational model does not capture the
thermodynamic fluctuations, which render Wulff circles below a certain
size unstable and prevent their formation for small~$m$.  Neither does
it capture the microscopic (i.e., ``finite-$N$'') structure of the
Wulff plaquettes, whose boundaries are in fact separated with gaps
that vanish in the continuous limit.  A first-principle approach that
takes proper account of these phenomena is due to R.~Dobrushin,
S.~Shlosman and their coauthors and is presented in their works
\cite{DobrKotShlos92,Schonmann.R:1996,IoffeShlos96,IoffeShlos08,IoffeShlos10,IoffeShlos12}.

It is worth observing that a similar problem with additional restriction that $u$ be a characteristic function
of some set (i.e.\ that the droplet has exactly one layer)
in the two-dimensional situation (i.e.\ when $n=2$), the set $\Omega$ is convex, and the norm $\p^*$ is Euclidean,
has been studied in~\cite{StredulZiem97}, and for more general anisotropic
norms (but for a somewhat different functional, namely, with penalization on the volume instead of the volume
constraint), in~\cite{NovPao05}. The latter problem will play an important role also in the present paper.
Eventually, one has to mention that it is also very similar to the well-known Cheeger problem,
the solutions of the latter being so-called Cheeger sets (see e.g.~\cite{ButCarComt07,KawNov08,KawLach06,CasShamNov10}).

Our aim in this paper is to study the variational problem
\eqref{varpb} in the two-dimensional case (i.e.\ when $n=2$). %
This geometric optimization problem is considered without resort to
the underlying lattice model or its continuous limit, allowing us to
treat an arbitrary open domain $\Om$ and an arbitrary norm $\p^*$ that
is not necessarily strictly convex. %
In this setting we completely characterize the minimizers and the
possible levels of~$u$ when the domain $\Om$ is convex. %
In particular it turns out that that except some degenerate situation,
which can however happen only when $\Omega$ is not strictly convex,
the number of nonzero levels of $u$ is at most two. %

The basic tool we use is the auxiliary problem when $u$ is a priori
required to have a single nonzero level (i.e.\ is requested to be a
characteristic function); namely, we show that in the two-dimensional
case ($n=2$) when $\Om$ is convex, the nonzero levels of solutions to the latter problem corresponding to different values of $m$ as $m$ grows
can be arranged as a family of sets ordered by inclusion. Thus, solutions to problem~\eqref{varpb}
can be seen as ``towers'' with levels solving the auxiliary problem.
The assumption of convexity of~$\Om$ is essential, as shown by a
counterexample at the end of Subsection~\ref{sec:isoperimetric-sets}.
The main result of the paper is formulated as Theorem~\ref{th_mcheeg1final}.
We conclude with an explicit example of solutions to~\eqref{varpb} for the case of a square $\Omega=[0,1]^2$ with
the Euclidean norm and a crystalline norm.

This work was inspired by some seminar talks of Senya Shlosman.  After
it was completed, we learned that a full description of the solutions
to the variational problem~\eqref{varpb} when $\Omega$ is a square and
$\p^*$ is generated by a physical Hamiltonian (in particular, is the
Euclidean norm) has been independently obtained by him and
Ioffe~\cite{shlosman} by a rigorous continuous limit of a suitable
lattice model.  Their proof, together with an analysis of the
microscopic structure of the solution and its behavior under thermal
perturbations, will appear in the forthcoming
publication~\cite{IoffeShlos12}.

\section{Notation and preliminary results}\label{sec_mcheeg_not}

For a set $E\subset \R^n$ we denote by $|E|$ its Lebesgue measure, by $\1_E$ its characteristic function,
by $\bar E$ its closure, by $\partial E$ its topological boundary, and by $E^c$ its complement.

In the following $\p$ will denote the given (not necessarily Euclidean) norm over $\R^n$.
Given $E\subset \R^n$ and $x \in \R^n$, we set
\begin{equation*}
\distf(x,E) := \inf_{y\in E}\p(x-y), \quad
d^E_\p(x) := \distf(x,E) - \distf(x,E^c).
\end{equation*}
The value $d^E_\p(x)$ is the signed distance from $x$ to $\partial E$ and is positive outside $E$. Notice that at each point where $d_\p^E$ is differentiable one has (see~\cite{BelNovMPao01-II})
\begin{equation}\label{unno}
\p^*(\nabla d^E_\p) = 1, \quad \qquad
\nu\cdot \nabla d^E_\p = 1 \quad \mbox{ for all } \nu\in\partial \p^*(\nabla d^E_\p),
\end{equation}
where $\p^*$ denotes the dual norm of $\p$ defined as
\[
\p^*(\xi) := \max\{\xi\cdot\eta\,:\, \p(\eta)\le 1\} 
\]
and $\partial \p^*$ denotes the subdifferential of $\p^*$ in the sense of convex analysis.
In particular
\[
\nabla d^E_\p = \frac{\nu^E}{\p^*(\nu^E)}
\]
where $\nu^E$ is the exterior Euclidean unit normal to $\partial E$.

We define the anisotropic perimeter of  a set $E\subseteq\R^n$ as
\begin{equation}\label{defper}
P_\p(E) := \sup \left\{ \int_E \div\eta \ dx\colon \eta\in C^1_0(\R^n),\,\p(\eta)\le 1\right\}
= \int_{\partial^* E}\p^*(\nu^E) d\mathcal H^{n-1},
\end{equation}
where $C^1_0(\R^n)$ stands for the set of continuously differentiable functions with compact support is $\R^n$,
$\partial^* E$ is the reduced boundary of $E$ according to De Giorgi and $\mathcal H^k$ stands for the $k$-dimensional
Hausdorff measure.
We will usually identify a set $E$ of finite perimeter with the set of its density points (i.e.\ points
of density $1$).

Given an open set $\Om\subset\R^n$ we define
the $BV$-seminorm of $v\in BV(\Om)$ as
\[
\int_\Om\p^*(Dv) :=
\sup \left\{ \int_\Om v \,\div\eta \ dx:\, \eta\in C^1_0(\R^n),\,\p(\eta)\le 1\right\}.
\]

We let $\Wp :=\{ x|~\p(x)< 1\}$, usually called the \emph{Wulff shape}, be the unit ball
of $\p$. Observe that $P_\p(W_\p)=n|W_\p|$.

In the sequel, given $x\in\R^n$ and $r>0$, we set $W_r(x):=
x+rW_\p$ (a \emph{Wulff ball} of radius $r$ with center $x$). In this notation the reference to
a norm $\p$ is not retained for the sake of brevity, but always silently assumed.
When $\p$ is the Euclidean norm, we will
use a more common notation $B_r(x)$ instead of $W_r(x)$ and $P$ instead of $P_\p$.

\begin{definition}
Given an $r>0$, we say that $E$ satisfies the $r\Wp$-condition, if for every $x\in\partial E$ there exists an $y\in\R^n$
such that
\[
W_r(y)\subset E \qquad \text{and}\qquad
x\in \partial W_r(y).
\]
\end{definition}

Observe that, if $E$ is convex, then $E^c$ satisfies the $r\Wp$-condition for all $r>0$.

We conclude the section by recalling the following isoperimetric inequality~\cite{TaylorJ75}.

\begin{proposition}
For all $E\subset\R^n$ such that $|E|<+\infty$ there holds
\begin{equation}\label{isop}
P_\p(E)\ge \frac{|E|^\frac{n-1}{n}}{|W_\p|^\frac{n-1}{n}}\,P_\p(W_\p).
\end{equation}
\end{proposition}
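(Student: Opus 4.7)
The plan is to derive the anisotropic isoperimetric inequality from the classical Brunn--Minkowski inequality applied to $E$ and a small dilate $\eps W_\p$ of the Wulff shape, by comparing the resulting Minkowski content with $P_\p(E)$.

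First I would reduce to the case $P_\p(E)<+\infty$ (otherwise the inequality is vacuous) and, by a standard mollification of $\1_E$, approximate $E$ in $L^1$ by a sequence of bounded open sets $E_j$ with smooth boundary such that $|E_j|\to |E|$ and $P_\p(E_j)\to P_\p(E)$ (the latter being essentially the lower semicontinuity of $P_\p$ together with the fact that the distributional definition \eqref{defper} behaves well under convolution). It is therefore enough to establish \eqref{isop} for sets with smooth boundary.

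Next, for such a smooth set $E$, I would establish the anisotropic tube formula
\[
\lim_{\eps\to 0^+}\frac{|E+\eps W_\p|-|E|}{\eps}=P_\p(E).
\]
This is the main technical step. Using \eqref{unno}, which gives $\nabla d^E_\p=\nu^E/\p^*(\nu^E)$ near $\partial E$, one writes $E+\eps W_\p\setminus E=\{0<d^E_\p<\eps\}$ and applies the coarea formula to $d^E_\p$; the Jacobian factor $1/|\nabla d^E_\p|$ evaluated on $\partial E$ is exactly $\p^*(\nu^E)$, so the right-hand side of \eqref{defper} appears in the limit. With the tube formula in hand, the classical Brunn--Minkowski inequality
\[
|E+\eps W_\p|^{1/n}\ge |E|^{1/n}+\eps |W_\p|^{1/n}
\]
gives, upon raising to the $n$-th power, subtracting $|E|$, dividing by $\eps$, and letting $\eps\to 0^+$,
\[
P_\p(E)\ge n |E|^{(n-1)/n}|W_\p|^{1/n}.
\]
Finally, using the identity $P_\p(W_\p)=n|W_\p|$ recorded in Section~\ref{sec_mcheeg_not} one rewrites $n|W_\p|^{1/n}=P_\p(W_\p)/|W_\p|^{(n-1)/n}$, obtaining \eqref{isop} for smooth $E$, and then passes to the limit along $E_j$.

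The main obstacle is the tube formula: one needs to control the behavior of $d^E_\p$ near $\partial E$ when $\p^*$ is not smooth or strictly convex (for instance, in the crystalline case considered in Section~\ref{secexa}), where $\nabla d^E_\p$ may not be single-valued and the coarea Jacobian is not immediately given by \eqref{unno} pointwise. This can be handled either by first approximating $\p$ by smooth strictly convex norms $\p_\delta$, proving the inequality for each $\p_\delta$, and then passing to the limit using the continuity of $\p\mapsto P_\p(E)$ for fixed smooth $E$; or by bypassing the tube formula altogether and only proving the sharp lower bound on $\liminf_{\eps\to 0}(|E+\eps W_\p|-|E|)/\eps$ via Brunn--Minkowski while separately checking that this liminf is bounded above by $P_\p(E)$ through the variational formula \eqref{defper} tested against suitable competitors $\eta$. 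Either route is routine once the smooth case is in hand.
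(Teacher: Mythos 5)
Your proof is correct, but you should know that the paper does not prove this proposition at all: it is recalled as a known result with a citation to Taylor~\cite{TaylorJ75}, whose original treatment characterizes minimizers of the anisotropic isoperimetric problem by geometric-measure-theoretic methods. What you propose is the standard self-contained alternative (essentially the Brunn--Minkowski/Minkowski-content route, as in Fonseca's ``Wulff theorem revisited''): approximate $E$ by smooth sets, compare $P_\p(E)$ with the anisotropic Minkowski content of $\dE$, and feed the Brunn--Minkowski inequality into the difference quotient, after which the normalization $P_\p(\Wp)=n|\Wp|$ from Section~\ref{sec_mcheeg_not} turns $n|E|^{(n-1)/n}|\Wp|^{1/n}$ into the right-hand side of \eqref{isop}. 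This buys a proof from first principles at the cost of the two reductions you correctly flag. Two remarks on those. First, the approximation step needs more than $L^1$ convergence: since $P_\p$ is only lower semicontinuous, you must arrange $\limsup_j P_\p(E_j)\le P_\p(E)$, which mollification of $\1_E$ together with the coarea formula and a good choice of level does provide (this is the standard strict-convergence/Reshetnyak argument, routine but worth stating). Second, since Brunn--Minkowski bounds the quotient $\bigl(|E+\eps\Wp|-|E|\bigr)/\eps$ from below for \emph{every} fixed $\eps$, the only estimate you actually need is the one-sided bound $\liminf_{\eps\to 0^+}\bigl(|E+\eps\Wp|-|E|\bigr)/\eps\le P_\p(E)$, not the full tube formula with equality; so your second fallback, which avoids the coarea computation of the Jacobian of $d^E_\p$ for nonsmooth $\p$ via \eqref{unno} and instead tests \eqref{defper} against suitable vector fields, is the cleaner way to close the argument in the crystalline case.
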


\section{Existence of minimizers}

Notice that, since the total variation is lower semicontinuous and the constraints are closed
under weak $BV$ convergence, by direct method of the calculus of variations one immediately gets
existence of  minimizers of~\eqref{varpb}.

\begin{proposition}
For any $m\ge 0$ there exists a (possibly nonunique) minimizer of~\eqref{varpb}.
\end{proposition}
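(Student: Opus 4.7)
The plan is the direct method of the calculus of variations applied to $F(u):=\int_\Om \p^*(Du)$ over the admissible class in~\eqref{varpb}. I choose a minimizing sequence $\{u_k\}\subset BV(\Rn)$ with $u_k=0$ outside $\Om$, $u_k(\cdot)\in\N$ a.e., $\int_\Om u_k\,dx=m$, and $F(u_k)\to\inf\eqref{varpb}$. Since $u_k\ge 0$ and is supported in $\overline\Om$, we have $\|u_k\|_{L^1(\Rn)}=m$; combined with the uniform bound on $F(u_k)$, this yields a uniform bound on $\|u_k\|_{BV(\Rn)}$.

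Next, assuming $\Om$ bounded (as is the setting of the paper), the compact embedding $BV(U)\hookrightarrow L^1(U)$ for any bounded open $U\supset\overline\Om$ gives, along a subsequence, $u_k\to u$ strongly in $L^1(\Rn)$ for some $u\in BV(\Rn)$; passing to a further subsequence, one may also assume $u_k(x)\to u(x)$ for a.e.\ $x\in\Rn$. The admissibility constraints then pass to the limit in a straightforward way: $u=0$ outside $\Om$ by $L^1$ convergence together with $u_k=0$ outside $\Om$; $\int_\Om u\,dx=m$ by $L^1$ convergence; and $u(\cdot)\in\N$ a.e.\ by pointwise a.e.\ convergence to values in~$\N$.

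It remains to show that $u$ minimizes, i.e.\ that $F(u)\le\liminf_k F(u_k)$. This is immediate from the dual definition of $F$: for any test field $\eta\in C^1_0(\Rn)$ with $\p(\eta)\le 1$, $L^1$ convergence gives
\begin{equation*}
\int_\Om u\,\div\eta\,dx=\lim_k\int_\Om u_k\,\div\eta\,dx\le\liminf_k F(u_k),
\end{equation*}
and taking the supremum over admissible $\eta$ yields $F(u)\le\liminf_k F(u_k)=\inf\eqref{varpb}$, so that $u$ is a minimizer. The only mildly delicate point in the whole argument is the preservation of the integrality constraint $u(\cdot)\in\N$, which is not closed under weak $BV$ convergence in general but is ensured by passing along a pointwise a.e.\ convergent subsequence, as extracted above; no other step presents any serious obstacle.
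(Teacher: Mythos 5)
Your argument is correct and is exactly the paper's approach: the paper disposes of existence in one sentence by the direct method, citing lower semicontinuity of the total variation and closedness of the constraints under weak $BV$ convergence, and your write-up simply makes this explicit (compactness from the uniform $BV$ bound on the bounded set $\overline\Om$, lower semicontinuity via the dual formulation, and the integrality constraint preserved along an a.e.\ convergent subsequence). No discrepancy to report.
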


For every $u\in L^1(\R^n)$ and $j\in \N$ we set
\begin{equation}\label{defEj}
E_j:=\{ u\ge j\}.
\end{equation}
It is worth observing that
whenever $u(\cdot)$ takes values in $\N$, one has
\begin{equation}\label{eq_mcheeg1int1}
u= 
\sum_{i=1}^\infty  \1_{E_i}
\end{equation}
and
\begin{equation}\label{eq_mcheeg1int2}
\int_{\R^n}\p^*(Du)= \sum_{i=1}^\infty  P_\p(E_i).
\end{equation}

\begin{remark}\label{remlim}
It is worth observing that, if we let $u_m$ be a minimizer of~\eqref{varpb} for a given $m>0$, then
the normalized functions $v_m:=u_m/m$ converge, as $m \to \infty$, up to a subsequence, to a minimizer of the problem
\[
\min \left\{ \int_\Om\p^*(Dv)\colon v\in BV(\R^n), \text{$v=0$ on
    $\Omega^c$},\ \int_\Om v\,dx = 1 \right\}
\]
which is closely related to the so-called Cheeger problem in $\Om$~\cite{KawNov08}.
\end{remark}

The following assertions hold true.

\begin{proposition}\label{prop_jmax}
If $u$ is a minimizer of~\eqref{varpb}, then $u\in L^\infty(\R^n)$.
\end{proposition}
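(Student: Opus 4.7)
The plan is a proof by contradiction. Assume $u$ is a minimizer of~\eqref{varpb} but $u\notin L^\infty(\R^n)$: then $|E_j|>0$ for every $j\in\N$, while by~\eqref{eq_mcheeg1int1}
\[
\sum_{j\geq 1}|E_j|=\int_\Om u\,dx=m<\infty,
\]
so the tail sums $v_J:=\sum_{j\geq J}|E_j|$ shrink to~$0$ as $J\to\infty$. For $J$ so large that $v_J$ is smaller than the volume of some Wulff ball contained in~$\Om$ (such a ball exists since $\Om$ is open and non-empty), I would fix a Wulff ball $G_J\subset\Om$ with $|G_J|=v_J$ and consider the competitor
\[
\tilde u_J:=\min(u,J-1)+\1_{G_J}.
\]
A direct check shows that $\tilde u_J$ is $\N$-valued, vanishes on $\R^n\setminus\Om$, and satisfies $\int_\Om \tilde u_J\,dx=(m-v_J)+v_J=m$, so it is admissible.

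The key step is a cost comparison. By subadditivity of the total variation and~\eqref{eq_mcheeg1int2} applied to the truncation (whose level sets are $E_1,\dots,E_{J-1}$),
\[
\int_\Om\p^*(D\tilde u_J)\leq\sum_{j=1}^{J-1}P_\p(E_j)+P_\p(G_J),
\]
so, subtracting $\int_\Om\p^*(Du)=\sum_{j\geq 1}P_\p(E_j)$,
\[
\int_\Om\p^*(D\tilde u_J)-\int_\Om\p^*(Du)\leq P_\p(G_J)-\sum_{j\geq J}P_\p(E_j).
\]
Setting $c_\p:=P_\p(\Wp)/|\Wp|^{(n-1)/n}$, one has $P_\p(G_J)=c_\p v_J^{(n-1)/n}$ (Wulff balls saturate~\eqref{isop}), while~\eqref{isop} gives $P_\p(E_j)\geq c_\p|E_j|^{(n-1)/n}$ for each~$j$, so the right-hand side above is bounded by
\[
c_\p\Bigl(v_J^{(n-1)/n}-\sum_{j\geq J}|E_j|^{(n-1)/n}\Bigr).
\]
I would then invoke the \emph{strict} subadditivity of $t\mapsto t^{(n-1)/n}$ on $[0,\infty)$ (which follows from strict concavity of this function together with its vanishing at zero): this yields $\sum_{j\geq J}|E_j|^{(n-1)/n}>v_J^{(n-1)/n}$ as soon as at least two among the $|E_j|$'s with $j\geq J$ are positive, which is the case under the contradiction hypothesis. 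Hence $\int_\Om\p^*(D\tilde u_J)<\int_\Om\p^*(Du)$, contradicting minimality.

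The main obstacle is guessing the correct competitor: the right idea is to truncate~$u$ above level $J-1$ (saving all perimeters $P_\p(E_j)$ for $j\geq J$) and re-inject the lost volume~$v_J$ in the single shape with smallest perimeter-to-volume ratio, i.e.\ a Wulff ball, rather than as a union of many small pieces. Once this ansatz is fixed, the conclusion follows from a short concatenation of~\eqref{isop}, the triangle inequality for the $BV$ seminorm, and the strict subadditivity of $t\mapsto t^{(n-1)/n}$.
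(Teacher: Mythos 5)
Your proposal is correct and follows essentially the same route as the paper: truncate $u$ at a high level, reinject the lost tail volume $\sum_{j\ge J}|E_j|$ as a single Wulff ball inside $\Om$, and conclude by the triangle inequality for the $BV$ seminorm, the isoperimetric inequality~\eqref{isop}, and the strict subadditivity (equivalently, strict concavity with value $0$ at the origin) of $t\mapsto t^{(n-1)/n}$. The only differences are notational (the paper writes the competitor as $\min(u,j)+\1_{W_{R_j}(x_0)}$ and packages the subadditivity via the function $f(t)=n|W_\p|^{1/n}t^{(n-1)/n}$).
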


\begin{proof}
Assume by contradiction that $|E_j|>0$ for all $j\in\N$.
Notice that \[
\lim_{j\to\infty}|E_j|=0
\]
(since otherwise $u$ would not be integrable).
Given $x_0\in\Om$ we let
\[
u_j := \min(u, j) + \1_{W_{R_j}(x_0)}
\]
where the radius $R_j$ is such that
\[
\int_\Om u_j = \int_\Om u=m,
\]
that is (keeping in mind~\eqref{eq_mcheeg1int1})
\[
|W_\p| R_j^n = \sum_{i>j} |E_i|,
\]
and choose $j\in \N$ big enough so that $W_{R_j}(x_0)\subset\Om$.

Letting $f(t) := n |W_\p|^{1/n}  t^{(n-1)/n}$,
so that
\[
P_\p(W_{R_j}(x_0))=f(|W_\p| R_j^n),
\]
we have
\[
\begin{array}{rll}
\displaystyle\int_\Om\p^*(Du_j) & \le \displaystyle\int_\Om\p^*(D\min(u,j)) + P_\p(W_{R_j}(x_0))\\
&=
\displaystyle\int_\Om\p^*(D\min(u,j)) + f(|W_\p| R_j^n)\\
& \le \displaystyle\int_\Om\p^*(D\min(u,j)) + \sum_{i>j} f(|E_i|) & \mbox{ by the concavity of $f$}\\
&\le \displaystyle\int_\Om\p^*(D\min(u,j)) + \sum_{i>j} P_\p(E_i) & \mbox{ by 
\eqref{isop}} \\
&= \displaystyle\int_\Om\p^*(Du) & \mbox{ by~\eqref{eq_mcheeg1int2}},
\end{array}
\]
the second inequality being strict unless $|E_i|=|E_k|$ for all $i>j$, $k>j$, thus leading to a contradiction.
\end{proof}

\begin{proposition}\label{prop_rel1}
Let $\Om\subset \R^n$ be star-shaped. Then
Problem~\eqref{varpb} is equivalent to the following relaxed problem
\begin{equation}\label{pbrel}
\min \left\{ \int_\Om\p^*(Du):\, u\in BV(\R^n),\ \text{$u=0$ on
    $\Omega^c$},\ u(\cdot)\in \N,\ \int_\Om u\,dx \ge m \right\}.
\end{equation}
Namely, the minimum values and the minimizers are the same for both problems.
\end{proposition}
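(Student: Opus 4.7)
The plan is to exploit the star-shape of $\Om$ via a radial contraction that decreases the integral $\int u$ and the anisotropic energy in a controlled manner. One direction is trivial: every admissible $u$ for \eqref{varpb} is admissible for \eqref{pbrel}, so $\min\eqref{pbrel}\le\min\eqref{varpb}$. For the reverse direction, and for the coincidence of minimizers, I would start from a minimizer $u$ of \eqref{pbrel} (whose existence follows by the direct method exactly as in the previous proposition, since the constraint $\int_\Om u\,dx\ge m$ is closed under weak $BV$ convergence) and show that necessarily $\int_\Om u\,dx = m$, so that $u$ is admissible, hence optimal, for \eqref{varpb}.

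Concretely, fix a point $x_0\in\Om$ with respect to which $\Om$ is star-shaped and, for $t\in(0,1]$, consider the contraction $\phi_t(x):=x_0+t(x-x_0)$. Star-shapedness implies $\Om_t:=\phi_t(\Om)\subset\Om$. Given $u$ admissible for \eqref{pbrel}, the rescaled function
\[
u_t(y):=u\bigl(\phi_t^{-1}(y)\bigr)\,\1_{\Om_t}(y)
\]
is again integer-valued, lies in $BV(\R^n)$, vanishes on $\Om^c$, and obeys the scaling identities
\[
\int_\Om u_t\,dy = t^n\int_\Om u\,dx,\qquad \int_\Om\p^*(Du_t)= t^{n-1}\int_\Om\p^*(Du).
\]
The first identity is an immediate change of variables; the second follows from the definition of the anisotropic total variation by substituting $\eta_t(x):=\eta(\phi_t(x))$ in the defining supremum and noting that the pointwise constraint $\p(\eta_t)\le 1$ is preserved.

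Now argue by contradiction. Suppose $u$ is a minimizer of \eqref{pbrel} with $M:=\int_\Om u\,dx > m$. Choosing $t^*:=(m/M)^{1/n}\in(0,1)$, the rescaled function $u_{t^*}$ is admissible for \eqref{varpb}, and a fortiori for \eqref{pbrel}, while its energy equals $(t^*)^{n-1}\int_\Om\p^*(Du)$, which is strictly smaller than $\int_\Om\p^*(Du)$ unless the latter vanishes. The degenerate case $\int_\Om\p^*(Du)=0$ forces $u\equiv 0$, hence $m=0$, which is trivial. In all other cases we contradict the minimality of $u$, so every minimizer of \eqref{pbrel} satisfies $\int_\Om u\,dx = m$ and is therefore a minimizer of \eqref{varpb}; conversely, any minimizer of \eqref{varpb} is admissible for \eqref{pbrel} and realizes the (now common) minimum value, hence is a minimizer of \eqref{pbrel}.

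No serious obstacle is expected: the only step requiring some care is the scaling identity for the anisotropic $BV$-seminorm, but this reduces to a change of variables in the dual definition of $\int_\Om\p^*(Du)$.
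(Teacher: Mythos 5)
Your proposal is correct and follows essentially the same route as the paper: the paper also reduces the claim to showing that a minimizer of \eqref{pbrel} must satisfy $\int_\Om u\,dx=m$, and obtains the contradiction by the same dilation argument, setting $u_\lambda(x):=u(x_0+\lambda(x-x_0))$ with $\lambda>1$ (your contraction with $t=1/\lambda$) and using the scalings $\int u_\lambda = \lambda^{-n}\int u$ and $\int_\Om\p^*(Du_\lambda)=\lambda^{1-n}\int_\Om\p^*(Du)<\int_\Om\p^*(Du)$.
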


\begin{proof}
It is enough to show that any minimizer $u$ of~\eqref{pbrel} satisfies
\begin{equation}\label{eqvol}
\int_\Om u\,dx = m.
\end{equation}
To this aim let $\Om$ be star-shaped with respect to $x_0$ and assume by contradiction that \eqref{eqvol} is violated. 
Let $u_\lambda(x):=u(x_0+\lambda(x-x_0))$ for any $\lambda>0$, so that
$u_\lambda\in BV(\R^n)$, $u_\lambda(\cdot)\in \N$, while, by star-shapedness of $\Om$, one has $u_\lambda=0$ outside of $\Om$ for every $\lambda\geq 1$.
Then there exists a $\lambda>1$ such that~\eqref{eqvol} holds with $u$ replaced by $u_\lambda$.
However
\[
\int_\Om\p^*(Du_\lambda) = \lambda^{1-n}\int_{x_0+\lambda(\Om-x_0)}\p^*(Du)
= \lambda^{1-n}\int_\Om \p^*(Du) < \int_\Om \p^*(Du)
\]
(the second equality is due to the fact that $\Om\subset
x_0+\lambda(\Om-x_0)$ for $\lambda >1$, while $u=0$ outside of $\Om$),
contradicting the minimality of $u$.
\end{proof}

\section{The convex two-dimensional case}\label{secconvex}

In this section we shall assume that $n = 2$ and $\Om\subset\R^2$ is a convex open set.

Given $E\subset \R^2$ and an $r>0$, we define the set $E^r\subset E$ by the formula
\begin{equation}\label{defomr}
E^r:=
\left\{
\begin{array}{ll}
\bigcup\left\{ W_r(x):\, W_r(x)\subset E\right\}, & {\rm if\ }r>0,
\\
\\
E, & {\rm if\ }r=0.
\end{array}
\right.
\end{equation}
Notice that, if $E$ is a convex set, then $E^r$ is convex
and satisfies the $rW_\p$-condition. The set $E^r$ is called the~\emph{Wulff plaquette} of radius~$r$
relative to $E$.

The following assertion holds.

\begin{lemma}\label{lm_EeqEr}
Let $E\subset \R^2$ be a convex open set satisfying the $rW_\p$-condition for some $r>0$, then $E=E^r$.
\end{lemma}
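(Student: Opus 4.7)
My plan is to establish the nontrivial inclusion $E\subset E^r$ (the reverse being immediate from the definition of $E^r$) by exploiting that $E^r$ is itself convex and that the $rW_\p$-condition forces the entire boundary of $E$ into the closure of $E^r$; the two facts combine via Hahn-Banach separation.

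First I would verify that $E^r$ is convex, open and nonempty. Writing the inner body $E\ominus rW_\p:=\{x\in\R^2:\,W_r(x)\subset E\}=\bigcap_{v\in rW_\p}(E-v)$, it is convex as an intersection of translates of the convex set $E$; hence $E^r=(E\ominus rW_\p)+rW_\p$ (Minkowski sum) is convex too. It is open as a union of open Wulff balls, and the $rW_\p$-condition applied at any $z\in\partial E$ furnishes some $x\in E\ominus rW_\p$, so $E^r\neq\emptyset$ (the trivial case $\partial E=\emptyset$ yields $E=\R^2=E^r$ at once). The same condition immediately gives $\partial E\subset\overline{E^r}$: for every $z\in\partial E$ the inscribed Wulff ball $W_r(x)$ with $z\in\partial W_r(x)$ has $x\in E\ominus rW_\p$, so $W_r(x)\subset E^r$ and $z\in\overline{W_r(x)}\subset\overline{E^r}$.

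The core step is a separation argument. Suppose for contradiction that some $y\in E\setminus\overline{E^r}$ exists. Since $\overline{E^r}$ is closed convex and $\{y\}$ is compact, Hahn-Banach provides a closed half-plane $H$ with $\overline{E^r}\subset H$ and $y\notin H$. By the previous step $\partial E\subset\overline{E^r}\subset H$, and since a bounded convex open set equals the interior of the convex hull of its boundary, this forces $\overline{E}\subset H$, contradicting $y\in E\setminus H$. Thus $E\subset\overline{E^r}$, so $\overline{E}=\overline{E^r}$; since both $E$ and $E^r$ are open convex sets with nonempty interior, the standard convex-analytic identity $\mathrm{int}(\overline{C})=C$ for such $C$ then yields $E=\mathrm{int}(\overline{E})=\mathrm{int}(\overline{E^r})=E^r$.

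The main obstacle is the implication ``$\partial E\subset H\Rightarrow\overline{E}\subset H$'', which is clean for bounded $E$ but needs a small extra argument if $E$ is unbounded (either by truncating $E$ with a large Wulff ball centered at $y$ and running the argument on the truncated convex body, or by noting that the recession cone of $E$ must itself lie in the recession direction of $H$). In the paper's applications, the sets in question live inside a bounded convex $\Omega$, so this subtlety does not arise.
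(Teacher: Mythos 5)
Your proof is correct and follows essentially the same route as the paper's: the inclusion $E^r\subset E$ is immediate, the $rW_\p$-condition places $\partial E$ inside $\overline{E^r}$, and convexity of both $E$ and $E^r$ forces equality. You merely make explicit, via Hahn--Banach separation and the interior-of-closure identity, the final step that the paper compresses into ``minding that $E$, and hence $E^r$, is convex, we get $E=E^r$,'' and you correctly flag (and indicate how to repair) the only real subtlety, namely the unbounded case, which is irrelevant in the paper's applications.
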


\begin{proof}
One has $E^r\subset E$. On the other hand, $\partial E\subset \partial E^r$ because $E$ satisfies the $rW_\p$-condition.
Minding that $E$, and hence $E^r$, is convex, we get $E=E^r$.
\end{proof}

It is worth mentioning that convexity of the set $E$ is essential in the above Lemma~\ref{lm_EeqEr}. In fact, if
$A$, $B$ and $C$ are the vertices of an equilateral triangle $\triangle ABC$ with sidelength $1$, then letting
\[
E:= B_{1/2}(A)\cup B_{1/2}(B)\cup B_{1/2}(C)\cup \triangle ABC
\]
we have that $E$ satisfies the $\frac 1 2 W_\p$-condition with respect to the Euclidean norm, but
\[
E^{1/2} = B_{1/2}(A)\cup B_{1/2}(B)\cup B_{1/2}(C) \neq E.
\]

\subsection{Isoperimetric sets}\label{sec:isoperimetric-sets}

We consider the constrained isoperimetric problem
\begin{equation}\label{isopb}
\min \left\{ P_\p(E):\, E\subset\Om,\, |E| =m\in [0,|\Om|] \right\}
\end{equation}
which corresponds to Problem~\eqref{varpb} under the additional constraint that $u$
is a characteristic function. Clearly, the minimizers to this problem exist and the assertion of
Proposition~\ref{prop_rel1} remains valid for this problem.

Let $R_\Om>0$ be the maximal radius $R$ such that $W_R(x)\subseteq\Om$ for some $x\in\Om$, and let
$r_\Om\in [0,R_\Om]$ be the maximal radius $r$ such that $\Om$ satisfies the $rW_\p$-condition
(we set for convenience $r_\Om:=0$ if $\Om$ does not satisfy any $rW_\p$-condition).
Observe that in the Euclidean case one has
\[
r_\Om= \frac{1}{\|\kappa\|_{L^\infty(\partial \Om)}}
\]
where $\kappa$ stands for the curvature of $\partial\Om$.

\begin{lemma}\label{lm_isoper1}
Let $m\in (0,|\Om|)$,
and let $E$ be a minimizer of~\eqref{isopb}. Then $E$ is convex and
there exists an $r>0$ (depending on $m$) such that $E$ satisfies the $rW_\p$-condition and each connected component
of $\partial E\cap \Om$ is contained in $\partial W_r(x)$, for some
Wulff ball $W_r(x)\subset \Om$ (with $x$ depending on the connected component of $\partial E\cap \Om$).
\end{lemma}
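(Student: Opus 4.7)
The plan is to decompose the proof into two independent steps: first show that the minimizer $E$ is convex, and then use a first-variation argument to show that its free boundary is a union of arcs of Wulff balls of a common radius~$r$, from which the $rW_\p$-condition is easily deduced.

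\emph{Step 1: convexity of $E$.} Set $F := \mathrm{conv}(E)$. Convexity of $\Om$ gives $F\subset\Om$, the classical monotonicity of the anisotropic perimeter under convex envelope gives $P_\p(F) \le P_\p(E)$, and obviously $|F|\ge |E|=m$. As noted just before the statement, the analogue of Proposition~\ref{prop_rel1} applies to~\eqref{isopb}: problem~\eqref{isopb} and its relaxed version with constraint $|E|\ge m$ have the same minimum value and the same minimizers, and every minimizer satisfies $|E|=m$ (by the scaling argument of Proposition~\ref{prop_rel1}, which is available since $\Om$ is convex and hence star-shaped). Applying this to $F$ yields $|F|=m=|E|$, so $E=F$ up to a null set, and $E$ is convex.

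\emph{Step 2: the free boundary is a union of Wulff arcs of common radius.} For every smooth, compactly supported deformation of $E$ inside $\Om$ that preserves volume to first order, the first variation of $P_\p$ at $E$ vanishes. The resulting Euler-Lagrange equation says that the generalized anisotropic curvature of $\partial E\cap\Om$ equals a constant Lagrange multiplier $1/r>0$, and this constant is the same on every connected component of $\partial E\cap\Om$: otherwise one could transfer a small amount of volume from a component of higher curvature to one of lower curvature and strictly decrease $P_\p$. By~\eqref{unno} applied to $d^{W_r(y)}_\p$, the boundary of any translate $W_r(y)$ of $rW_\p$ is precisely the locus of generalized anisotropic curvature $1/r$; combined with convexity of $E$, this forces each connected component of $\partial E\cap\Om$ to be contained in $\partial W_r(y)$ for some $y$, with $W_r(y)$ on the concave side of the arc, i.e.\ $W_r(y)\subset E\subset\Om$.

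\emph{Step 3: the $rW_\p$-condition.} For every $x\in \partial E\cap\Om$, the ball $W_r(y)$ associated by Step~2 to the component of $\partial E\cap\Om$ containing $x$ is a Wulff ball of radius $r$ contained in $E$ with $x\in\partial W_r(y)$. For $x\in\partial E\cap\partial\Om$, approximate $x$ by a sequence $x_k\in\partial E\cap\Om$ and extract a subsequence along which the associated centres $y_k$ converge to some $y$; compactness and convexity of $E$ then yield $W_r(y)\subset E$ with $x\in\partial W_r(y)$, which is exactly the $rW_\p$-condition at $x$.

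\emph{Main obstacle.} The delicate step is the Euler--Lagrange analysis for a possibly non-smooth or crystalline norm~$\p$, in which the ``anisotropic curvature'' is a set-valued object defined via the subdifferential of~$\p^*$, and one must rigorously justify that the Lagrange multiplier is the same on all components of the free boundary. However, this planar convex setting is essentially the one treated in~\cite{StredulZiem97,NovPao05} for the single-layer isoperimetric problem (Euclidean and general anisotropic case respectively), and our argument relies on those references for the fine description of $\partial E\cap\Om$.
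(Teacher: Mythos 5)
Your overall strategy (convexify first, then a first-variation analysis of the free boundary) is the same as the paper's, but two of your steps contain genuine gaps.

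In Step 1, the inequality $P_\p(\mathrm{conv}(E))\le P_\p(E)$ is false for a general set of finite perimeter: if $E$ is the union of two small Wulff balls placed far apart inside $\Om$, its convex hull has much larger perimeter than $E$. Since a priori a minimizer of~\eqref{isopb} could be disconnected, you cannot invoke ``classical monotonicity under convex envelope'' without first ruling out disconnectedness. The paper does exactly this: it decomposes $E$ into finitely many measure-theoretic connected components, convexifies each one separately (using the Jordan-curve parameterization of its boundary and Jensen's inequality, together with the analogue of Proposition~\ref{prop_rel1}), and then excludes the presence of more than one component by translating a component inside the convex set $\Om$ until its boundary touches another component and convexifying the union, which strictly decreases the perimeter while increasing the volume. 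That translation-and-merge step is precisely where the convexity of $\Om$ enters, and it is absent from your argument.

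In Step 3, you assume that every $x\in\partial E\cap\partial\Om$ can be approximated by points of $\partial E\cap\Om$. This fails in exactly the cases that matter: for $\Om$ a square and $E=\Om^{r}$ a Wulff plaquette, long flat portions of $\partial E$ lie on $\partial\Om$ and have no free-boundary points nearby, so your compactness argument gives nothing at such points, yet the $rW_\p$-condition must still be verified there. The paper avoids this by proving the one-sided curvature bound ${\div}_\tau n\le 1/r$ $\HH^1$-a.e.\ on \emph{all} of $\partial E$, not only on $\partial E\cap\Om$: it uses volume-preserving variations that push $\partial E$ inward near an arbitrary point of $\partial E$ (always admissible, since the competitor remains in $\Om$) and outward near a point of the free boundary, treating separately the cases where the boundary near each point does or does not contain a segment; for a convex set this curvature bound is equivalent to the $rW_\p$-condition. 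By contrast, your Step 2 (constancy of the Lagrange multiplier across components of the free boundary, with the crystalline regularity deferred to \cite{NovPao05,AmbNovPao02}) is essentially the paper's argument and is acceptable as a sketch.
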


\begin{remark}
Recall that here and in the sequel when speaking of the properties of a set $E$ of finite perimeter
we actually refer to the respective properties of the set of its density points. In particular, a
minimizer $E$ of~\eqref{isopb} is not necessarily convex, but the set of its density points is (and hence, in particular, the closure $\bar E$ is convex).
\end{remark}

\begin{proof}
We divide the proof into four steps.

{\sc Step 1.} Let us first show the convexity of $E$. As in~\cite[Theorem 2]{ACMM}
we can uniquely decompose $E$ as a union of (measure theoretic) connected components
$\{E_i\}_{i\in I}$, where $I$ is finite or countable, such that
\[
|E|=\sum_{i\in I}|E_i|\qquad {\rm and}\qquad P_\p(E)=\sum_{i\in I}P_\p(E_i).
\]

As in~\cite[Proposition~6.12]{AmbNovPao02}, one shows
by the isoperimetric inequality and the minimality of $E$,
that the number of connected components is finite
and the boundary of each connected component $E_i$ is parameterized
by a finite number of pairwise disjoint Jordan curves. In particular,
the boundaries of two different connected components do not intersect.
Further, using lemma~6.9 from~\cite{AmbNovPao02} one has that
the perimeter $P_\p(E_i)$ of a measure theoretic connected component $E_i$
having the boundary parameterized by Jordan curves
$\{\theta_i^j\}_{j=1}^{N_i}$ (all parameterized, say, over $[0,1]$) is given by
\[
P_\p (E_i)= \sum_{j=1}^{N_i} \int_0^1 \psi(\dot{\theta}_i^j(t))\,dt,
\]
where
$\psi \colon \R^2\to \R$ is some convex and $1$-homogeneous functions
(in fact, $\psi := \p^*\circ R$, $R$ being the clockwise rotation of $\R^2$ by $\pi/2$, see
corollary~6.10 from~\cite{AmbNovPao02}).
Hence, using Jensen inequality one shows that
the convex envelope of $E_i$ has lower (anisotropic) perimeter than $E_i$ itself, and minding that it also has greater volume
(as well as the fact that the assertion of
Proposition~\ref{prop_rel1} is valid for Problem~\eqref{isopb}), one has that each $E_i$ is convex.

Finally, if $E$ is not connected,
recalling that $\Omega$ is convex we can translate a connected component inside $\Omega$ in such a way that
its boundary touches the boundary of another connected component (this does not change neither the perimeter nor the volume),
and taking the convex envelope of the resulting set
we obtain again a set with greater volume and strictly lower
anisotropic perimeter, hence a contradiction which shows that
$E$ is convex.

{\sc Step 2.} Reasoning as in~\cite[theorem~4.5]{NovPao05},
where the authors consider the related problem
$\min \left\{ P_\p(E)-\lambda|E|:\, E\subset\Om,\, \lambda\ge 0 \right\}$
instead of \eqref{isopb}, one gets that each connected component
of $\partial E\cap \Om$ is contained in $\partial W_{r}(x)$, for some $x\in\R^2$ and $r>0$.

Moreover, as in~\cite[theorem~6.19]{AmbNovPao02} one can show the existence of a (possibly nonunique) Lipschitz continuous vector field
$n\colon\partial E\to \R^2$ such that $n(x)\in\partial \varphi^*(\nu(x))$ for $\HH^1$-a.e. $x\in \partial E$.
In particular ${\div}_\tau n\in L^\infty(\partial E)$, where ${\div}_\tau n := \partial_\tau(n\cdot\tau)$ denotes the tangential divergence
of $n$ (here and below $\tau$ and $\nu$ denote the Euclidean unit tangent and exterior normal vectors to $\partial E$ respectively) and corresponds to the anisotropic curvature of $\partial E$
(cfr.~\cite{TaylorJ75,BelNovMPao01-II}).

Without loss of generality we may assume that ${\div}_\tau n$ is constant along every maximal segment contained in $\partial E$
(if not, we can substitute $n$ over the segment by a convex combination of its values on the endpoints of the segment;
one would then still have $n\in \partial \varphi^*(\nu)$ along the segment because $\nu$ is constant there and
$\partial \varphi^*(\cdot)$ is convex). In particular, if a connected component $\Sigma$
of $\partial E\cap \Om$ is contained in $\partial W_{r}(x)$,
then $n(y)=(y-x)/(r \varphi(y-x))$
for $\HH^1$-a.e. $y\in\Sigma$.

{\sc Step 3.} We now prove that $E$ satisfies the $rW_\p$-condition for some $r>0$.
Since $E$ is convex, it is enough to show that
\begin{equation}\label{divtau}
{\div}_\tau n\le \frac{1}{r} \qquad \textrm{$\HH^1$-a.e. on $\partial E$.}
\end{equation}
This follows by a local variation argument as in the proof of Lemma~\ref{lm_Euler_mult} below.
Let us fix $x_1\in \Sigma$, where $\Sigma$ is a connected component of $\partial E\cap \Om$,
and $x_2\in\partial E\setminus \bar \Sigma$. We know from the previous step that $\Sigma$
is contained in $\partial W_{r}(x)$ for some $x\in\R^2$ and $r>0$. We distinguish four cases.

\noindent {\sc Case 1.}
There are two disjoint open sets $U_i$, $i=1,2$, such that
$x_i\in U_i$ and $U_i\cap \partial E$ do not contain segments.
Let $\psi_1,\psi_2$ be two nonnegative smooth functions, with support on $U_1,U_2$ respectively,
such that
\begin{equation}\label{condpsi}
\int_{U_1\cap \partial E}\psi_1(z)\varphi^* (\nu(z))\,d\HH^1(z)
=\int_{U_2\cap \partial E}\psi_2(z)\varphi^* (\nu(z))\,d\HH^1(z),
\end{equation}
where $\nu$ stands for the exterior Euclidean unit normal to $\partial E$.
We consider a family of diffeomorphisms such that
$$
\Psi(\varepsilon,x):= x+ \varepsilon \psi_1(x) n(x)-\varepsilon \psi_2(x) n(x)  +o(\eps)
$$
for $\varepsilon>0$ small enough. By \eqref{condpsi}, the term $o(\varepsilon)$ can be chosen in such a way that
\begin{equation}\label{eqess}
|E^\varepsilon| = |E| \qquad \text{for all $\varepsilon >0$ small enough},
\end{equation}
with $E^\varepsilon:= \Psi(\varepsilon,E)\subset\Om$.
We then have
\begin{align*}
P_\p(E^\eps) &= P_\p(E) +  \frac{\varepsilon}{r} \int_{U_1\cap\partial E}\psi_1(z)\varphi^* (\nu(z))\,d\HH^1(z)\\
 & \ \ -\varepsilon
\int_{U_2\cap\partial E}\psi_{2}(z){\div}_\tau n(z)\varphi^* (\nu(z))\,d\HH^1(z) + o(\varepsilon).
\end{align*}
As $\varepsilon \to 0^+$,
by minimality of $E$, we get
\[
\frac{1}{r}
\int_{U_1\cap\partial E}\psi_1(z)\varphi^* (\nu(z))\,d\HH^1(z)
\ge
\int_{U_2\cap\partial E}\psi_{2}(z){\div}_\tau n(z)\varphi^* (\nu(z))\,d\HH^1(z)
\]
which in view of~\eqref{condpsi} gives~\eqref{divtau}.

\noindent {\sc Case 2}.
We can find two maximal segments $\ell_1,\ell_2\subset\partial E$ such that
$x_i\in\ell_i$,
and we define $E^\varepsilon$ by shifting $\ell_1$ by $c_1\varepsilon$ parallel to itself outside
$E$, and by shifting
$\ell_2$ by $c_2\varepsilon$ inside of $E$,
with $c_1,\,c_2$ so that~\eqref{eqess} holds, that is
\begin{equation}\label{eq_Euler3}
c_1|\ell_1|=c_{2}|\ell_{2}|.
\end{equation}
By~\cite[Lemma 4.4]{NovPao05} we have
\begin{align*}
P_\p(E^\eps)   = P_\p(E)
+  c_1 \alpha_1\varepsilon   - c_2\alpha_{2} \varepsilon  + o(\varepsilon).
\end{align*}
where $\alpha_1,\,\alpha_{2}$ are respectively the (Euclidean) length of the face of $W_\varphi$ parallel to $\ell_1,\,\ell_{2}$.
By minimality of $E$, letting $\eps\to 0^+$ we obtain $c_1\alpha_1 \geq c_{2}\alpha_{2}$.
Recalling~\eqref{eq_Euler3}, we finally get
$$
\frac 1 r = \frac{\alpha_1}{|\ell_1|}\ge \frac{\alpha_2}{|\ell_2|}={\div}_\tau n(z)
\qquad \textrm{for}\ z\in\ell_2.
$$

\noindent {\sc Case 3}. There is a maximal segment $\ell_1\subset \partial E$
and an open set $U_2$ such that $x_1\in\ell_1$, $x_2\in U_2$ and
$U_2\cap \partial E$ does not contain segments.
We proceed by combining the previous strategies and we define the set $E^\varepsilon$ by shifting
$\ell_1$ by $\varepsilon$ parallel to itself outside $E$, and then taking the image of the resulting set
through the diffeomorphism
\[
\Psi(\varepsilon,x):= x -\varepsilon \psi_2(x) n(x)  +o(\eps),
\]
where $\psi_2$ is a nonnegative smooth function supported on $U_2$ satisfying
\begin{equation}\label{elleuno}
\int_{U_2\cap \partial E}\psi_2(z)\varphi^* (\nu(z))\,d\HH^1(z) = |\ell_1|.
\end{equation}
This condition guarantees that the volume change after these two operations is of order $o(\eps)$,
so that the extra term $o(\eps)$ in the definition of $\Psi$ is chosen in such a way that~\eqref{eqess} holds.
Reasoning as above, we get
\[
P_\p(E^\eps) = P_\p(E) +\alpha_1\varepsilon -\varepsilon
\int_{U_2\cap\partial E}\psi_{2}(z){\div}_\tau n(z)\varphi^* (\nu(z))\,d\HH^1(z) + o(\varepsilon),
\]
which gives, by minimality of $E$,
\[
\alpha_1=\frac{|\ell_1|}{r}\ge \int_{U_2\cap\partial E}\psi_{2}(z){\div}_\tau n(z)\varphi^* (\nu(z))\,d\HH^1(z)
\]
which gives \eqref{divtau}, recalling \eqref{elleuno}.

\noindent {\sc Case 4}. There is a maximal segment $\ell_2\subset \partial E$
and an open set $U_1$ such that $x_1\in U_1$, $x_2\in \ell_2$ and
$U_1\cap \partial E$ does not contain segments.
This case can be dealt with reasoning as in the previous case, by shifting
$\ell_2$ by $\varepsilon$ inside $E$ and defining
\[
\Psi(\varepsilon,x):= x +\varepsilon \psi_1(x) n_2(x)  +o(\eps).
\]

{\sc Step 4.} From \eqref{divtau} it follows that the
radius $r$ in Step 3 does not depend on the connected component $\Sigma$. In particular,
every connected component
of $\partial E\cap \Om$ is contained in $\partial W_r(x)$, for a fixed $r>0$
(while $x$ depends in general on the connected component).
\end{proof}

Consider now the function $v(r):=|\Om^r|$. It is clearly
constantly equal to $|\Om|$ for $r\le r_\Om$ and to zero for $r>R_\Om$, while over $[r_\Om, R_\Om]$
it is continuous and monotone decreasing.
In particular, for all $m\in [|\Om^{R_\Om}|,|\Om|]$ there exists a unique value $r_m\in [r_\Om,R_\Om]$ such that $v(r_m)=m$.

From the isoperimetric inequality~\eqref{isop} and Lemma~\ref{lm_isoper1}, we get the following statement.

\begin{proposition}\label{proball}
Let $\Om\subset \R^2$ be convex, and let $E$ be a minimizer of~\eqref{isopb} with $m\in [0,|\Om|]$.
Then
\begin{itemize}
\item[(a)] either $\bar E=\bar \Om^{r_m}$, if $m> |\Om^{R_\Om}|$,
\item[(b)] or $\bar E$ is the closure of some
convex union of Wulff balls of radius $R_\Om$, if $m\in [R_\Om^2|W_\p|, |\Om^{R_\Om}|]$,
\item[(c)] or $\bar E=\bar W_{\sqrt{m/|W_\p|}}(x)$ for some $x\in\Om$, if $m\le R_\Om^2|W_\p|$.
\end{itemize}
\end{proposition}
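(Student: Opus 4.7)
The plan is to treat the three cases separately by exploiting the isoperimetric inequality~\eqref{isop} for small volumes and the boundary structure furnished by Lemma~\ref{lm_isoper1} for larger volumes.

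For $m \le R_\Omega^2|W_\p|$, the Wulff ball $W_r(x)$ of radius $r=\sqrt{m/|W_\p|}\le R_\Omega$ fits inside $\Omega$ (by definition of $R_\Omega$) and is admissible; it attains the lower bound in~\eqref{isop}, so it is a minimizer, and the equality case of~\eqref{isop} forces every minimizer to be such a Wulff ball. This yields case~(c).

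For $m>R_\Omega^2|W_\p|$, no Wulff ball of volume $m$ fits in $\Omega$, so Lemma~\ref{lm_isoper1} applies nontrivially: the minimizer $E$ is convex, $E=E^r$ for some $0<r\le R_\Omega$, and each connected component of $\partial E\cap\Omega$ lies on some $\partial W_r(x)$ with $W_r(x)\subset\Omega$. The inclusion $E\subset\Omega^r$ is immediate from $E=E^r$. If $r=R_\Omega$, convexity makes $E$ a convex union of Wulff balls of radius $R_\Omega$, and $E\subset\Omega^{R_\Omega}$ gives $m\le|\Omega^{R_\Omega}|$, placing us in case~(b). If instead $r<R_\Omega$, my goal is $E=\Omega^r$: the strict monotonicity of $v(r):=|\Omega^r|$ on $[r_\Omega,R_\Omega]$ then identifies $r=r_m$ and forces $m>|\Omega^{R_\Omega}|$, giving case~(a). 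For the remaining (reverse) inclusion $\Omega^r\subset E$, I would suppose $E\subsetneq\Omega^r$ and contradict minimality by testing against the admissible competitor $\Omega^{r_m}$, which has volume exactly $m$; writing both sets as Minkowski sums $E=E_{-r}+rW_\p$ and $\Omega^{r_m}=\Omega_{-r_m}+r_m W_\p$ (with inner parallel sets $E_{-r}=\{y:W_r(y)\subset E\}$ and $\Omega_{-r_m}$ defined analogously) and exploiting the additivity of the anisotropic perimeter under Minkowski sums of planar convex bodies together with~\eqref{isop}, one expects $P_\p(\Omega^{r_m})<P_\p(E)$.

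The technically delicate step is precisely this last strict comparison: quantifying how increasing the Wulff radius from $r$ to the maximal admissible $r_m$ improves the perimeter-to-area ratio when $E\subsetneq\Omega^r$ requires a careful interplay between planar Minkowski-additivity of $P_\p$ and the convexity of both $E_{-r}$ and $\Omega_{-r_m}$. I expect this to be the main obstacle, while the isoperimetric small-volume case and the $r=R_\Omega$ structural case should follow essentially directly from Lemma~\ref{lm_isoper1} and the definitions.
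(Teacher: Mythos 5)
Your handling of case (c) (a Wulff ball of volume $m$ fits in $\Om$ and saturates \eqref{isop}) and of the subcase $r=R_\Om$ leading to (b) coincides with the paper's argument. The genuine gap is exactly where you flag it: the reverse inclusion $\Om^r\subset E$ when $r<R_\Om$. The strict inequality $P_\p(\Om^{r_m})<P_\p(E)$ for every admissible convex $E$ with $|E|=m$ and $E\neq\Om^{r_m}$ \emph{is} the content of case (a) together with its uniqueness, so writing that ``one expects'' it from Minkowski additivity leaves the heart of the proposition unproved. Nor is carrying your plan out a routine verification: decomposing $E=E_{-r}+rW_\p$ and $\Om^{r_m}=\Om_{-r_m}+r_mW_\p$ turns the comparison into a two-parameter minimization over the inner convex body $K$ and the radius $r$, subject to $|K|+rP_\p(K)+r^2|W_\p|=m$ and $K+rW_\p\subset\Om$, and nothing in your sketch singles out $(\Om_{-r_m},r_m)$ as the strict minimizer of $P_\p(K)+2r|W_\p|$ under these constraints.

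The paper avoids any such energy comparison by using the full strength of Lemma~\ref{lm_isoper1}: not only does $E$ satisfy the $rW_\p$-condition (whence $E=E^r$ by Lemma~\ref{lm_EeqEr}), but each connected component of $\partial E\cap\Om$ must be contained in the boundary of a \emph{single} Wulff ball $W_r(x)\subset\Om$. Suppose $E=E^r\subset\Om^r$ but $E\neq\Om^r$. If $\bar E\subset\Om$, then $\partial E\cap\Om=\partial E$ is a single connected component, forcing $E$ to be one Wulff ball of radius $r\le R_\Om$ and hence $m=r^2|W_\p|\le R_\Om^2|W_\p|\le|\Om^{R_\Om}|$, contradicting $m>|\Om^{R_\Om}|$. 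Otherwise $E$ meets $\partial\Om$, and one finds a connected component of $\partial E\cap\Om$ that cannot lie on the boundary of a single inscribed Wulff ball, again contradicting Lemma~\ref{lm_isoper1}. Either way the conclusion $\bar E=\bar\Om^r=\bar\Om^{r_m}$ follows with no perimeter estimate at all; I would replace your Minkowski-sum step by this structural argument, or else actually supply a proof of the strict inequality you need.
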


\begin{proof}
We can assume $m\in (0,|\Om|)$.
By Lemma~\ref{lm_isoper1}, there exists an $r>0$ (depending on $m$) such that
$\bar E$ is the closure of a union of Wulff balls of radius $r$, hence
$\bar E\subset \bar \Om^{r}$ and $r\le R_\Om$.

If $m>|\Om^{R_\Om}|$, then necessarily $r<R_\Om$ and
$\bar E = \bar \Om^{r}$, since otherwise we could find a connected component of $\partial E\cap \Om$
which is not contained in the boundary of a Wulff ball, contradicting Lemma~\ref{lm_isoper1}.
In particular, we have $r=r_m$.

If $m\in [R_\Om^2|W_\p|, |\Om^{R_\Om}|]$ then $r= R_\Om$, since otherwise
$\bar E$ would coincide with  the set $\bar \Om^r$ (with $r<R_\Om$) which has volume strictly greater than $|\Om^{R_\Om}|$.

If $m\le R_\Om^2|W_\p|$ the result follows by the isoperimetric inequality~\eqref{isop}.
\end{proof}

\begin{remark}\label{rem_strconv1}
It is worth noticing that, if $\Om$ is strictly convex, then there exists a \emph{unique} Wulff ball
$W_{R_\Om}(x)\subset \Om$, and thus
$\Om^{R_\Om}= W_{R_\Om}(x)$. 
In other words, the case~(b) of the above Proposition~\ref{proball} reduces to case~(c).
Therefore, either $\bar E=\bar \Om^{r_m}$, if $m\ge |\Om^{R_\Om}|$, or
$\bar E=\bar W_{\sqrt{m/|W_{\p}|}}(x)$ for some $x\in\Om$, if $m\le |\Om^{R_\Om}|$.
\end{remark}

We now state an easy consequence of Proposition~\ref{proball}
showing that solutions to problems~\eqref{isopb} with decreasing volumes
may be arranged as a decreasing sequence of sets.

\begin{corollary}\label{corball}
Let $\Om$ be convex and let $m_j$ be a decreasing sequence such that
$m_j\in (0,|\Om|)$, for all $j$.
Then, there exists a sequence of sets $E_j$ such that
$E_{j+1}\subset E_j\subset\Om$, $|E_j|=m_j$ and each
$E_j$ is a minimizer of~\eqref{isopb} with $m:=m_j$.
\end{corollary}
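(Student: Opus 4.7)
The plan is to apply Proposition~\ref{proball} case by case, exploiting the freedom in choosing among (generally non-unique) minimizers so as to arrange the resulting family to be nested. I fix once and for all a reference center $y_0\in\Om$ with $W_{R_\Om}(y_0)\subset\Om$ (which exists by the very definition of $R_\Om$) and consider the set $C:=\{y\in\R^2 : W_{R_\Om}(y)\subset\Om\}$. Convexity of $\Om$ yields convexity of $C$, since for $y_1,y_2\in C$ and $s\in[0,1]$ one has $W_{R_\Om}((1-s)y_1+sy_2)=(1-s)W_{R_\Om}(y_1)+sW_{R_\Om}(y_2)\subset\Om$; in particular $\Om^{R_\Om}=C+R_\Om W_\p$ in the sense of Minkowski addition. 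For every $t\in[0,1]$ I introduce the homothety $C_t:=y_0+t(C-y_0)$, yielding an increasing family of convex sets interpolating between $C_0=\{y_0\}$ and $C_1=C$.

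I then define $E_j$ according to which case of Proposition~\ref{proball} is triggered by $m_j$. For $m_j>|\Om^{R_\Om}|$ (case (a)) I set $E_j:=\Om^{r_{m_j}}$, which is in fact forced. For $m_j\in[R_\Om^2|W_\p|,|\Om^{R_\Om}|]$ (case (b)) I set $E_j:=C_{t_j}+R_\Om W_\p$, where $t_j\in[0,1]$ is chosen so that $|E_j|=m_j$; such a $t_j$ exists because $t\mapsto|C_t+R_\Om W_\p|$ is continuous and runs from $R_\Om^2|W_\p|$ at $t=0$ to $|\Om^{R_\Om}|$ at $t=1$, and since this volume is nondecreasing in $t$ I can take $t_j$ to depend monotonically on $m_j$. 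For $m_j<R_\Om^2|W_\p|$ (case (c)) I set $E_j:=W_{\rho_j}(y_0)$ with $\rho_j:=\sqrt{m_j/|W_\p|}$. Each such $E_j$ is genuinely a minimizer at level $m_j$: in case (b), $E_j$ is a convex union of Wulff balls of radius $R_\Om$ contained in $\Om$, because every point of $C_{t_j}$, being a convex combination of $y_0\in C$ and a point of $C$, lies in $C$.

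It remains to verify $E_{j+1}\subset E_j$. Within each of the three cases this is immediate: in (a), $r\mapsto\Om^r$ is decreasing while $m\mapsto r_m$ is decreasing, so $r_{m_{j+1}}\ge r_{m_j}$ and hence $\Om^{r_{m_{j+1}}}\subset\Om^{r_{m_j}}$; in (b), one combines $C_s\subset C_t$ for $s\le t$ with the monotonicity of $t_j$ in $m_j$; in (c), the Wulff balls $W_{\rho_j}(y_0)$ are automatically nested. For transitions between cases I rely on two basic inclusions: $\Om^{R_\Om}\subset\Om^r$ whenever $r\le R_\Om$, which places any case-(b) or case-(c) set inside any case-(a) set with larger mass, and $y_0\in C_t$ for every $t$, which gives $W_{R_\Om}(y_0)\subset C_t+R_\Om W_\p$ and places any case-(c) set inside any case-(b) set with larger mass. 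I do not anticipate a serious obstacle: the argument is essentially a careful bookkeeping enabled by the non-uniqueness available in cases (b) and (c), and the key geometric ingredient is the coherent common centering at $y_0$ together with the homothetic family $C_t$, which provides a monotone bridge from a single Wulff ball of radius $R_\Om$ to the full Wulff plaquette $\Om^{R_\Om}$.
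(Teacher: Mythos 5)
Your construction is the natural one and, apart from one point, it works. The paper offers no written proof, presenting the corollary as an immediate consequence of Proposition~\ref{proball}, and your case analysis with the common center $y_0$ and the homothetic family $C_t$ is precisely the bookkeeping being left to the reader: the nesting verifications within each case and across the transitions are correct, and in case (c) the sets $W_{\rho_j}(y_0)$ are indeed minimizers because Wulff balls realize equality in the isoperimetric inequality~\eqref{isop}.

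The gap is in case (b). Proposition~\ref{proball} is a \emph{necessary} condition: it says that every minimizer is the closure of a convex union of Wulff balls of radius $R_\Om$, not that every such set of the prescribed volume is a minimizer, yet you invoke the latter when you declare that $E_j=C_{t_j}+R_\Om W_\p$ is ``genuinely a minimizer'' merely because it is a convex union of Wulff balls of radius $R_\Om$ contained in $\Om$. The converse is true but needs an argument. The key observation is that the maximality of $R_\Om$ forces $C=\{y:\ W_{R_\Om}(y)\subset\Om\}$ to have empty interior: if $W_\rho(z)\subset C$ for some $\rho>0$, then $W_{R_\Om+\rho}(z)=W_\rho(z)+R_\Om W_\p\subset C+R_\Om W_\p\subset\Om$, a contradiction. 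Hence $C$ is a point or a segment, and every convex union of Wulff balls of radius $R_\Om$ contained in $\Om$ has the form $K+R_\Om W_\p$ with $K$ a subsegment of $C$. Its volume is a strictly increasing affine function of the length of $K$ (when $C$ is nondegenerate; otherwise case (b) is trivial), so two such sets of equal volume come from subsegments of equal length, are therefore translates of one another, and have equal anisotropic perimeter. Since by Proposition~\ref{proball} together with the existence of minimizers at least one of them is a minimizer, all of them are --- in particular yours. With this supplement the proof is complete.
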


Note that the convexity assumption of the set $\Om$ is essential in the above result. In fact,
reasoning as in~\cite[section~6]{KawLach06} with the example of $\Omega$ a couple of circles connected
by a thin tube (like a barbell considered in~\cite[section~6]{KawLach06}), one
provides
a family of  minimizers of~\eqref{isopb} with decreasing volumes which cannot be arranged as a decreasing sequence of sets
(see Figure~\ref{fig_Kawohl1} below).

\begin{center}
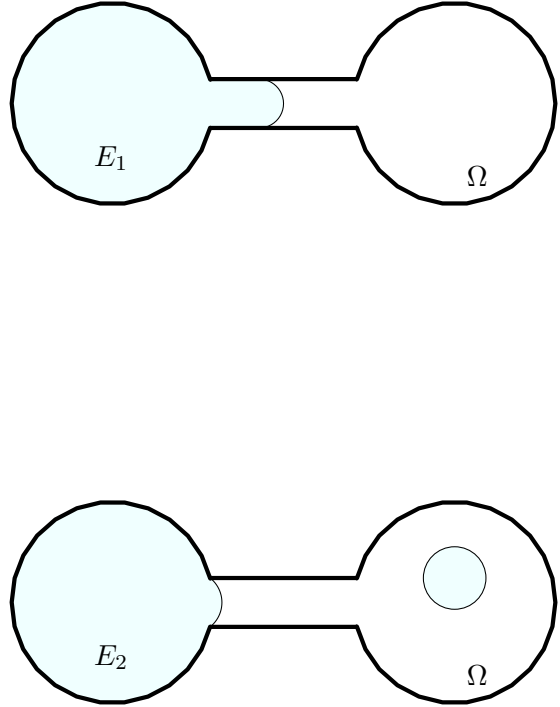
\begin{figure}[ht]\label{fig_Kawohl1}
\definecolor{ccffcc}{rgb}{0.8,1,0.8}
\definecolor{wwffff}{rgb}{0.4,1,1}
\definecolor{xdxdff}{rgb}{0.49,0.49,1}
\definecolor{zzffzz}{rgb}{0.6,1,0.6}
\definecolor{cqcqcq}{rgb}{0.75,0.75,0.75}
\begin{tikzpicture}[line cap=round,line join=round,>=triangle 45,x=0.65cm,y=0.65cm]
\clip(-4.3,-3.86) rectangle (8.26,6.3);
\draw[fill=wwffff,fill opacity=0.1]
plot[domain=0.24:6.04,variable=\t]({-2+ 1*2.06*cos(\t r)+0*2.06*sin(\t r)},{1.5+ 0*2.06*cos(\t r)+1*2.06*sin(\t r)}) -- (0,1)--
plot[domain=-1.57:1.57,variable=\t]({1+1*0.5*cos(\t r)+0*0.5*sin(\t r)},{1.5+ 0*0.5*cos(\t r)+1*0.5*sin(\t r)})
-- (0,2)
-- cycle;
\draw [line width=1.6pt] plot[domain=0.24:6.04,variable=\t]({-2+ 1*2.06*cos(\t r)+0*2.06*sin(\t r)},{1.5+ 0*2.06*cos(\t r)+1*2.06*sin(\t r)}) -- (0,1)-- (3,1) --
plot[domain=-2.9:2.9,variable=\t]({5+ 1*2.06*cos(\t r)+0*2.06*sin(\t r)},{1.5+0*2.06*cos(\t r)+1*2.06*sin(\t r)})
-- (3,2) -- (0,2)
-- cycle;
\draw (5.04,0.42) node[anchor=north west] {$\Omega$};
\draw (-2.58,0.84) node[anchor=north west] {$E_1$};
\end{tikzpicture}
\begin{tikzpicture}[line cap=round,line join=round,>=triangle 45,x=0.65cm,y=0.65cm]
\clip(-4.3,-3.86) rectangle (8.26,6.3);
\draw[fill=wwffff,fill opacity=0.1]
plot[domain=0.24:6.04,variable=\t]({-2+ 1*2.06*cos(\t r)+0*2.06*sin(\t r)},{1.5+ 0*2.06*cos(\t r)+1*2.06*sin(\t r)}) --
plot[domain=-0.9:0.9,variable=\t]({-0.4+1*0.64*cos(\t r)+0*0.64*sin(\t r)},{1.5+0*0.64*cos(\t r)+1*0.64*sin(\t r)})
-- cycle;
\draw[fill=wwffff,fill opacity=0.1] (5,2) circle (0.64);
\draw [line width=1.6pt] plot[domain=0.24:6.04,variable=\t]({-2+ 1*2.06*cos(\t r)+0*2.06*sin(\t r)},{1.5+ 0*2.06*cos(\t r)+1*2.06*sin(\t r)}) -- (0,1)-- (3,1) --
plot[domain=-2.9:2.9,variable=\t]({5+ 1*2.06*cos(\t r)+0*2.06*sin(\t r)},{1.5+0*2.06*cos(\t r)+1*2.06*sin(\t r)})
-- (3,2) -- (0,2)
-- cycle;
\draw (5.04,0.42) node[anchor=north west] {$\Omega$};
\draw (-2.58,0.84) node[anchor=north west] {$E_2$};
\end{tikzpicture}
\caption{$\Omega\subset \R^2$ nonconvex (two circles connected with a thin tube) and two minimizers of~\eqref{isopb}
which cannot be included one into another.}
\end{figure}
\end{center}

\subsection{Isoperimetric towers}\label{sec:isoperimetric-towers}

We return now to the original problem~\eqref{varpb}.
Here and below we let $u\in L^1(\R^2)$ be an arbitrary minimizer of this problem and
$E_j$ be its level set corresponding to a $j\in \N$, as defined by~\eqref{defEj}.
The following result follows directly from Corollary~\ref{corball}.

\begin{proposition}\label{procon}
If $\Om$ is convex, then for all $j\in\N$ the set $E_j$ is a minimizer of problem~\eqref{isopb} with $m:=|E_j|$
(in particular $E_j$ is convex).
\end{proposition}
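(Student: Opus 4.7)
The plan is to exploit the tower structure available for minimizers of the single-level problem~\eqref{isopb}, afforded by Corollary~\ref{corball}, together with the layer-cake decomposition \eqref{eq_mcheeg1int1}--\eqref{eq_mcheeg1int2} that turns the $BV$-seminorm of an $\N$-valued function into the sum of anisotropic perimeters of its level sets. In this way, the integer-valued constraint linking the levels $E_j$ can be decoupled into independent single-level problems, provided the optimal shapes in the single-level problems can be arranged in a nested family.

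First I would set $m_j:=|E_j|$, noting that $m_1\ge m_2\ge\dots$ is a nonincreasing sequence and, by Proposition~\ref{prop_jmax}, only finitely many $m_j$ are nonzero. The trivial indices $j$ for which $m_j\in\{0,|\Om|\}$ can be discarded (if $m_j=|\Om|$ then $E_j=\Om$ is automatically an admissible minimizer of~\eqref{isopb}, while $m_j=0$ means $E_j=\emptyset$). Applying Corollary~\ref{corball} to the remaining nonincreasing sequence produces sets $F_1\supset F_2\supset\cdots\subset\Om$ with $|F_j|=m_j$ and each $F_j$ a minimizer of~\eqref{isopb} with $m:=m_j$. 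Setting
\[
\tilde u:=\sum_{j}\1_{F_j},
\]
the nesting of the $F_j$ guarantees that $\tilde u$ is $\N$-valued; it also vanishes outside $\Om$ and satisfies $\int_\Om \tilde u\,dx=\sum_j |F_j|=\sum_j |E_j|=m$, so $\tilde u$ is an admissible competitor for~\eqref{varpb}.

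Next I would invoke~\eqref{eq_mcheeg1int2} to compute
\[
\int_\Om \p^*(D\tilde u)=\sum_{j}P_\p(F_j)\le \sum_{j}P_\p(E_j)=\int_\Om \p^*(Du),
\]
where the inequality is termwise, since each $E_j$ is an admissible (but not necessarily optimal) competitor for~\eqref{isopb} with volume $m_j$, which $F_j$ solves. Minimality of $u$ forces equality throughout, hence $P_\p(E_j)=P_\p(F_j)$ for every $j$, showing that each $E_j$ is itself a minimizer of~\eqref{isopb} with $m:=|E_j|$. Convexity of $E_j$ is then an immediate consequence of Lemma~\ref{lm_isoper1}.

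I do not anticipate any serious obstacle; the only point requiring care is the bookkeeping at the extreme volume values $0$ and $|\Om|$ (where Corollary~\ref{corball} does not literally apply) and the observation that the sums above are finite thanks to Proposition~\ref{prop_jmax}. The real content has already been absorbed into Corollary~\ref{corball}, whose nontriviality is precisely what makes this step a \emph{direct} consequence, as advertised.
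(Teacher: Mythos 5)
Your argument is correct and is essentially the paper's own proof: both use Corollary~\ref{corball} to produce a nested family of single-level minimizers with the prescribed volumes $|E_j|$, sum up their characteristic functions to get an admissible competitor, and compare via the layer-cake identity~\eqref{eq_mcheeg1int2}. The paper phrases this as a contradiction while you run the comparison directly and extract termwise equality, but the content is identical; your extra bookkeeping for the endpoint volumes and finiteness of the tower is harmless and slightly more careful than the original.
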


\begin{proof}
If the assertion is not true, then
considering a sequence of sets $E_j'$
of minimizers of~\eqref{isopb} (with $m:=|E_j|$) such that
$E_{j+1}'\subset E_j'\subset\Om$, $|E_j'|:=|E_j|$
(the existence of such a sequence is guaranteed by Corollary~\ref{corball}), and setting
\[
u':=\sum_j \1_{E_j'},
\]
we get
\[
\int_{\R^2} \p^*(Du')=
\sum_j P_{\p}(E_j') < \sum_j P_{\p}(E_j) = 
\int_{\R^2} \p^*(Du),
\]
the strict inequality being due to the fact that one of $E_j$ is not a minimizer of~\eqref{isopb} (with $m:=|E_j|$) by assumption.
On the other hand,
\[
\int_\Om u'\, dx = \int_\Om u\, dx=m,
\]
since the level sets of $u'$ and $u$ have the same volume by construction.
This would mean that $u$ is not a solution to problem~\eqref{varpb}.
\end{proof}

\begin{remark}\label{remrem}
Observe that, by Proposition~\ref{procon} and Lemma~\ref{lm_isoper1},
each set $E_i$ is convex and each connected component of
$\partial E_i\cap \Omega$ is contained 
in
$\partial W_{r_i}(x_i)$ for some Wulff ball $W_{r_i}(x_i)\subset \Om$. 
\end{remark}

\begin{lemma}\label{lm_Euler_mult}
Let $S_i$, $S_j$ be connected components of $\partial E_i \cap \Omega$ and
$\partial E_j \cap \Omega$ respectively, with $j>i$, such that
\begin{align} \label{sisj}
\nonumber
& S_i  \subset \partial W_{r_i}(x_i)\subset \bar\Omega \\
\nonumber
& S_j \subset \partial W_{r_j}(x_j)\subset\bar\Omega \\
& \frac{1}{r_i} (S_i - x_i) \subset \frac{1}{r_j} (S_j - x_j)
\end{align}
for some $x_i, x_j\in \Omega$, $r_i, r_j >0$.
Then $r_i\geq r_j$.
\end{lemma}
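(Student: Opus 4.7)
I plan to argue by contradiction: supposing $r_i < r_j$, I build an admissible competitor $u^\varepsilon$ to the minimizer $u$ that preserves $\int_\Omega u\,dx$ but strictly lowers $\int_\Omega \p^*(Du)$. The variation is a two-level version of Case~1 of Step~3 in the proof of Lemma~\ref{lm_isoper1}: I push the higher-level arc $S_j$ outward (enlarging $E_j$) and the lower-level arc $S_i$ inward (shrinking $E_i$), and the assumed inclusion $(S_i-x_i)/r_i\subset (S_j-x_j)/r_j$ lets me synchronize the two pushes via a common parameter domain on the unit Wulff sphere.

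Concretely, I would set $\Sigma:=(S_i-x_i)/r_i\subset\partial W_\p$, fix a nonnegative bump $\Psi\in C^\infty_c(\Sigma)$ supported in the relative interior of $\Sigma$, and put $\psi_j(x_j+r_j\omega):=\Psi(\omega)$ on $S_j$ and $\psi_i(x_i+r_i\omega):=(r_j/r_i)\,\Psi(\omega)$ on $S_i$. Then I define $E_j^\varepsilon$ as the image of $E_j$ under the diffeomorphism $x\mapsto x+\varepsilon\psi_j(x) n(x)+o(\varepsilon)$ supported near $S_j$, and $E_i^\varepsilon$ as the image of $E_i$ under $x\mapsto x-\varepsilon\psi_i(x) n(x)+o(\varepsilon)$ supported near $S_i$, with the two $o(\varepsilon)$-corrections chosen so that $|E_j^\varepsilon|-|E_j|=|E_i|-|E_i^\varepsilon|$ exactly. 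Setting $u^\varepsilon:=\sum_k\1_{E_k^\varepsilon}$ with $E_k^\varepsilon:=E_k$ for $k\notin\{i,j\}$ gives an $\N$-valued $BV$ function with $\int u^\varepsilon\,dx=\int u\,dx$.

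The rest is a direct adaptation of the first-variation computation from Case~1 of Lemma~\ref{lm_isoper1}. Using the Jacobian $d\HH^1(x_j+r_j\omega)=r_j\,d\HH^1_{\partial W_\p}(\omega)$ (and analogously with $r_i$), the identity $n\cdot\nu^E=\p^*(\nu^E)$ from~\eqref{unno}, and the fact that the anisotropic curvatures of $\partial W_{r_i}(x_i)$ and $\partial W_{r_j}(x_j)$ equal $1/r_i$ and $1/r_j$ respectively, one obtains
\begin{align*}
\int_\Omega \p^*(Du^\varepsilon) - \int_\Omega \p^*(Du)
&= \varepsilon\!\left(\frac{1}{r_j}\int_{S_j}\!\psi_j\,\p^*(\nu^E)\,d\HH^1 - \frac{1}{r_i}\int_{S_i}\!\psi_i\,\p^*(\nu^E)\,d\HH^1\right)+o(\varepsilon)\\
&= \varepsilon\left(1-\frac{r_j}{r_i}\right)\int_\Sigma \Psi\,\p^*(\nu^E)\,d\HH^1 + o(\varepsilon),
\end{align*}
which is strictly negative whenever $r_i<r_j$, contradicting the minimality of $u$ and forcing $r_i\geq r_j$.

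The main obstacle I expect is verifying genuine admissibility of $u^\varepsilon$ for small $\varepsilon$: the inward push of $S_i$ must not collide with $\partial E_{i+1}$, and the outward push of $S_j$ must not cross $\partial E_{j-1}$ or $\partial\Omega$. Keeping $\mathrm{supp}\,\Psi$ strictly in the relative interior of $\Sigma$ puts the perturbation at positive Euclidean distance from $\partial\Omega$, and convexity of all level sets (Proposition~\ref{procon}) together with the Wulff-arc structure of $\partial E_k\cap\Omega$ (Remark~\ref{remrem}) ensures that, for $\varepsilon$ small, no collision with adjacent levels occurs---up to passing, if necessary, to an innermost arc shared by several consecutive levels, an operation that preserves the sign of the computation above.
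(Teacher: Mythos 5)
Your variation is, in substance, Case~1 of the paper's own proof of this lemma: one shrinks $E_i$ along $S_i$ and grows $E_j$ along $S_j$ with normal speeds balanced so that the total volume is conserved, and the first variation of the energy yields $r_i\ge r_j$. Your device of pulling both bumps back to a common profile $\Psi$ on $\Sigma\subset\partial W_\p$ is a clean way to enforce the volume balance that the paper imposes directly as condition~\eqref{eq_Euler1aux}, and your remark that possible non-nestedness of the perturbed level sets is harmless (since $\int_\Omega\p^*(Du^\varepsilon)\le\sum_k P_\p(E_k^\varepsilon)$ always holds) is correct. For a smooth, strictly convex norm the computation is right and complete.

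The genuine gap is the crystalline case, which the paper explicitly allows ($\p^*$ is an arbitrary norm, and Section~\ref{secexa} treats $\p(\nu)=|\nu_1|+|\nu_2|$). If $S_i$, and hence by \eqref{sisj} also $S_j$, contains a line segment --- a facet of $W_\p$ with normal $\nu_0$ --- then $\p^*$ is not differentiable at $\nu_0$, and deforming that facet with a \emph{non-constant} normal speed $\Psi$ tilts the boundary normals away from $\nu_0$ by angles of order $\varepsilon\dot\Psi$; since the one-sided directional expansions of $\p^*$ at $\nu_0$ disagree, the anisotropic length acquires an additional non-negative first-order term proportional to $\varepsilon\int|\dot\Psi|$, which is strictly positive for any compactly supported bump. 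Your expansion $P_\p(E^\varepsilon)=P_\p(E)+\varepsilon\int\psi\,\kappa_\p\,\p^*(\nu)\,d\HH^1+o(\varepsilon)$ therefore fails on facets, and the sign of the net first variation is no longer controlled. This is precisely why the paper's proof has a second case in which the maximal segments $\ell_i\subset S_i$ and $\ell_{i+1}\subset S_{i+1}$ are shifted \emph{parallel to themselves}, the perimeter change being computed via \cite[Lemma 4.4]{NovPao05} as $c\,\alpha\,\varepsilon$ with $\alpha$ the Euclidean length of the corresponding face of $W_\p$, so that $r=|\ell|/\alpha$ and the volume balance \eqref{eq_Euler2aux} gives the conclusion; the hypothesis \eqref{sisj} guarantees that the two arcs are simultaneously of segment type or of curved type, so only these two cases arise (your common parametrization over $\Sigma$ uses this implicitly). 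To make your proof complete you must add this parallel-shift argument for the segment case.
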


\begin{proof}
It is enough to consider the case $j=i+1$.
We can also assume $S_i\neq S_{i+1}$, otherwise there is nothing to prove.
As in Figure~\ref{twocases}, there are two cases to consider.

\noindent {\sc Case 1.}
There are two points $y_i\in S_i$,
$y_{i+1}\in  S_{i+1}$ and two open sets $U_i
\subset\Omega$ and
$U_{i+1}
\subset\Omega$
such that $y_i\in U_i$, $y_{i+1}\in U_{i+1}$,
$U_i\cap U_{i+1}=\emptyset$ and $U_i\cap S_i$ as well as $U_{i+1}\cap S_{i+1}$
does not contain segments. Consider a smooth function
$\psi_i$ with support on $U_i$.
It generates a one-parameter family of diffeomorphisms of $E_i$ defined by
\[
\Psi_i(\varepsilon,x):= x- \varepsilon \psi_i(x) n_i(x)
\]
for all sufficiently small $\varepsilon>0$, where
\[
n_i(x):=\frac{x-x_i}{r_i \varphi(x-x_i)}.
\]
Consider now a one-parameter family $\{\Psi_{i+1}(\varepsilon,\cdot)\}$ of diffeomorphisms of
$E_{i+1}$ such that $\Psi_{i+1}(0,x)=x$ for all $x\in E_{i+1}$,
$\Psi_{i+1}(\varepsilon,\cdot)-\mbox{Id}$
is supported in $U_{i+1}$
for all $\varepsilon >0$, while
\[
\Psi_{i+1}(\varepsilon,x):=x+\varepsilon \psi_{i+1}(x)n_{i+1}(x)+o(\varepsilon)
\]
as $\varepsilon\to 0^+$, where $\psi_{i+1}$ is some smooth function
(with support in $U_{i+1}$),
and
\[
n_{i+1}(x):=\frac{x-x_{i+1}}{r_{i+1} \varphi(x-x_{i+1})}.
\]
We choose $\Psi_{i+1}$ so that the sets
$E_i^\varepsilon:= \Psi_i(\varepsilon,E_i)$ and
$E_{i+1}^\varepsilon:= \Psi_{i+1}(\varepsilon,E_{i+1})$ satisfy
\[
|E_i^\varepsilon| + |E_{i+1}^\varepsilon| = |E_i| + |E_{i+1}|
\]
for all sufficiently small $\varepsilon >0$.
Denote by $\nu_j$ the exterior Euclidean unit normal to $\partial E_j$.
Since
\begin{align*}
|E_i^\varepsilon| &= |E_i| - \varepsilon
\int_{\partial E_i\cap U_i}\psi_i(z)\varphi^* (\nu_i(z))\,d\HH^1(z) + o(\varepsilon),\\
|E_{i+1}^\varepsilon| &= |E_{i+1}| + \varepsilon \int_{\partial E_{i+1}\cap U_{i+1}}\psi_{i+1}(z)\varphi^* (\nu_{i+1}(z))\,d\HH^1(z) + o(\varepsilon),
\end{align*}
as $\varepsilon \to 0^+$,
we have
\begin{equation}\label{eq_Euler1aux}
\begin{array}{l}
\displaystyle
\int_{\partial E_i\cap U_i}\psi_i(z)\varphi^* (\nu_i(z))\,d\HH^1(z)=\\
\displaystyle\qquad\qquad\int_{\partial E_{i+1}\cap U_{i+1}}\psi_{i+1}(z)\varphi^* (\nu_{i+1}(z))\,d\HH^1(z).
\end{array}
\end{equation}
Letting now
\[
u_\varepsilon := u - \1_{E_i}-\1_{E_{i+1}} + \1_{E_i^\varepsilon}+\1_{E_{i+1}^\varepsilon}
= \sum_{k\neq i, k\neq i+1} \1_{E_k} + \1_{E_i^\varepsilon}+\1_{E_{i+1}^\varepsilon},
\]
we have
$\int_\Omega u_\varepsilon\,dx = \int_\Omega u\,dx$ for all
sufficiently small $\varepsilon >0$.
Recall that
\begin{align*}
\int_\Omega \varphi^*(D u_\varepsilon)  & =
\int_\Omega \varphi^*(D u) \\
 &  -  \varepsilon \int_{\partial E_i \cap U_i}\frac{1}{r_i}\psi_i(z)\varphi^* (\nu_i(z))\,d\HH^1(z)\\
 & +\varepsilon
\int_{\partial E_{i+1}\cap U_{i+1}}\frac{1}{r_{i+1}}\psi_{i+1}(z)\varphi^* (\nu_{i+1}(z))\,d\HH^1(z) \\
& + o(\varepsilon).
\end{align*}
As $\varepsilon \to 0^+$,
by minimality of $u$, we get
\[
\begin{array}{l}
-\displaystyle\frac{1}{r_i}
\int_{\partial E_i \cap U_i}\psi_i(z)\varphi^* (\nu_i(z))\,d\HH^1(z)\\
\displaystyle\qquad\qquad+
\frac{1}{r_{i+1}}\int_{\partial E_{i+1}\cap U_{i+1}}\psi_{i+1}(z)\varphi^* (\nu_{i+1}(z))\,d\HH^1(z)\geq 0,
\end{array}
\]
which together with~\eqref{eq_Euler1aux} implies the thesis.

\noindent {\sc Case 2}.
We can find two maximal line segments $\ell_i\subset S_i$ and $\ell_{i+1}\subset S_{i+1}$.
We define then
$E_i^\varepsilon$ by shifting the segment $\ell_i$ by $c_i\varepsilon$ parallel to itself inside
$E_i$ and
$E_{i+1}^\varepsilon$ by
shifting the segment $\ell_{i+1}$  parallel to itself outside of $E_{i+1}$ by
$c_{i+1}\varepsilon$  with $c_i$ and $c_{i+1}$ so as to satisfy \[
|E_i^\varepsilon| + |E_{i+1}^\varepsilon| = |E_i| + |E_{i+1}|
\]
for all $\varepsilon >0$ sufficiently small.
Since
\begin{align*}
|E_i^\varepsilon| &= |E_i| - c_i |\ell_i|\varepsilon
+ o(\varepsilon),\\
|E_{i+1}^\varepsilon| &= |E_{i+1}| + c_{i+1} |\ell_{i+1}|\varepsilon + o(\varepsilon),
\end{align*}
as $\varepsilon \to 0^+$,
we have
\begin{equation}\label{eq_Euler2aux}
c_i|\ell_i|=c_{i+1}|\ell_{i+1}|.
\end{equation}
Letting again, as in Case~1,
\[
u_\varepsilon := u - \1_{E_i}-\1_{E_{i+1}} + \1_{E_i^\varepsilon}+\1_{E_{i+1}^\varepsilon}
= \sum_{k\neq i, k\neq i+1} \1_{E_k} + \1_{E_i^\varepsilon}+\1_{E_{i+1}^\varepsilon},
\]
we have
$\int_\Omega u_\varepsilon\,dx = \int_\Omega u\,dx$ for all
sufficiently small $\varepsilon >0$.
On the other hand, by~\cite[lemma 4.4]{NovPao05}
\begin{align*}
\int_\Omega \varphi^*(D u_\varepsilon)   =
\int_\Omega \varphi^*(D u) -  c_i \alpha_i\varepsilon   +c_{i+1}\alpha_{i+1} \varepsilon  + o(\varepsilon).
\end{align*}
where $\alpha_i,\,\alpha_{i+1}$ are respectively the (Euclidean) length of the face of $W_\varphi$ parallel to $\ell_i,\,\ell_{i+1}$.
By minimality of $u$, letting $\eps\to 0^+$ we obtain $c_i\alpha_i \leq c_{i+1}\alpha_{i+1}$.
Recalling~\eqref{eq_Euler2aux}, we get
$r_i=|\ell_i|/\alpha_i\geq |\ell_{i+1}|/\alpha_{i+1}=r_{i+1}$.

Notice that in this proof we do not have to deal with the situation depicted in Cases~3 and~4 of the proof of Lemma~\ref{lm_isoper1} due to condition~\eqref{sisj}.
In fact, the latter implies that if $S_i$ contains a line segment $\ell_i$, then the line segment $\ell_j:=x_j +  (l_i-x_i)r_j/r_i$ is contained in $S_j$. Otherwise, if there is a neighborhood $U_i$ of a point of $S_i$ such that $S_i\cap U_i$ does not contain any line segment, then $U_j:=x_j + (U_i-x_i)r_j/r_i$ is a neighborhood of a point in $S_j$ such that
$S_j\cap U_j$ does not contain any line segment.
\end{proof}

\begin{center}
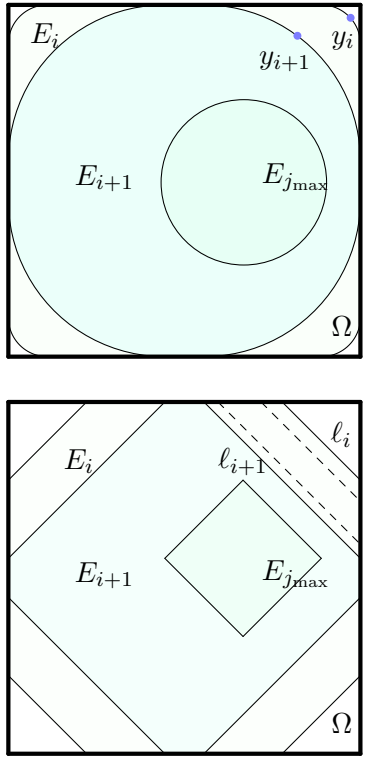
\begin{figure}[ht]\label{twocases}
\definecolor{ccffcc}{rgb}{0.8,1,0.8}
\definecolor{wwffff}{rgb}{0.4,1,1}
\definecolor{xdxdff}{rgb}{0.49,0.49,1}
\definecolor{zzffzz}{rgb}{0.6,1,0.6}
\definecolor{cqcqcq}{rgb}{0.75,0.75,0.75}
\begin{tikzpicture}[line cap=round,line join=round,>=triangle 45,x=0.52cm,y=0.52cm]
\clip(-4.3,-3.8) rectangle (9.58,6.3);
\draw [
fill=ccffcc,fill opacity=0.1]
plot[domain=0:1.57,variable=\t]({4+1*1*cos(\t r)+0*1*sin(\t r)},{5+0*1*cos(\t r)+1*1*sin(\t r)}) -- (-3,6) --
plot[domain=1.57:pi,variable=\t]({-3+1*1*cos(\t r)+0*1*sin(\t r)},{5+ 0*1*cos(\t r)+1*1*sin(\t r)}) -- (-4,-2) --
plot[domain=pi:4.71,variable=\t]({-3+1*1*cos(\t r)+0*1*sin(\t r)},{-2+ 0*1*cos(\t r)+1*1*sin(\t r)}) -- (4,-3) --
plot[domain=-1.57:0,variable=\t]({4+ 1*1*cos(\t r)+0*1*sin(\t r)},{-2+0*1*cos(\t r)+1*1*sin(\t r)})
 -- cycle ;
\draw [
fill=wwffff,fill opacity=0.1]
plot[domain=0:1.57,variable=\t]({1+1*4*cos(\t r)+0*4*sin(\t r)},{2+ 0*4*cos(\t r)+1*4*sin(\t r)}) -- (0,6) --
plot[domain=1.57:pi,variable=\t]({1*4*cos(\t r)+0*4*sin(\t r)},{2+0*4*cos(\t r)+1*4*sin(\t r)}) -- (-4,1)
   --  plot[domain=pi:4.71,variable=\t]({1*4*cos(\t r)+0*4*sin(\t r)},{1+ 0*4*cos(\t r)+1*4*sin(\t r)}) -- (1,-3)
 --  plot[domain=-1.57:0,variable=\t]({1+ 1*4*cos(\t r)+0*4*sin(\t r)},{1+ 0*4*cos(\t r)+1*4*sin(\t r)})
 -- cycle ;
\draw [line width=1.6pt] (-4,-3)-- (5,-3);
\draw [line width=1.6pt] (5,-3)-- (5,6);
\draw [line width=1.6pt] (5,6)-- (-4,6);
\draw [line width=1.6pt] (-4,6)-- (-4,-3);
\draw [
fill=zzffzz,fill opacity=0.05] (2.02,1.46) circle (1.1cm);
\draw (4.0,-1.7) node[anchor=north west] {\parbox{2.2 cm}{$ \Omega \\  $}};
\draw (-3.7,5.79) node[anchor=north west] {$E_i$};
\draw (-2.56,2.16) node[anchor=north west] {$E_{i+1}$};
\draw (2.22,2.22) node[anchor=north west] {$E_{j_{\rm max}}$};
\draw (2.15,5.06) node[anchor=north west] {$y_{i+1}$};
\draw (4.00,5.58) node[anchor=north west] {$y_{i}$};
\fill [color=xdxdff] (3.39,5.21) circle (1.5pt);
\fill [color=xdxdff] (4.75,5.67) circle (1.5pt);
\end{tikzpicture}
\begin{tikzpicture}[line cap=round,line join=round,>=triangle 45,x=0.52cm,y=0.52cm]
\clip(-4.3,-3.8) rectangle (9.58,6.3);
\fill[line width=1.2pt,color=ccffcc,fill=ccffcc,fill opacity=0.05] (-4,4) -- (-2,6) -- (3,6) -- (5,4) -- (5,-1) -- (3,-3) -- (-2,-3) -- (-4,-1) -- cycle;
\fill[line width=1.2pt,color=wwffff,fill=wwffff,fill opacity=0.05] (-4,2) -- (0,6) -- (1,6) -- (5,2) -- (5,1) -- (1,-3) -- (0,-3) -- (-4,1) -- cycle;
\fill[line width=1.2pt,color=zzffzz,fill=zzffzz,fill opacity=0.05] (2,0) -- (4,2) -- (2,4) -- (0,2) -- cycle;
\draw [line width=1.6pt] (-4,-3)-- (5,-3);
\draw [line width=1.6pt] (5,-3)-- (5,6);
\draw [line width=1.6pt] (5,6)-- (-4,6);
\draw [line width=1.6pt] (-4,6)-- (-4,-3);
\draw  (-4,2)-- (0,6);
\draw  (0,6)-- (1,6);
\draw  (1,6)-- (5,2);
\draw  (5,2)-- (5,1);
\draw  (5,1)-- (1,-3);
\draw  (1,-3)-- (0,-3);
\draw  (0,-3)-- (-4,1);
\draw  (-4,1)-- (-4,2);
\draw  (-4,4)-- (-2,6);
\draw  (-2,6)-- (3,6);
\draw  (3,6)-- (5,4);
\draw  (5,4)-- (5,-1);
\draw  (5,-1)-- (3,-3);
\draw  (3,-3)-- (-2,-3);
\draw  (-2,-3)-- (-4,-1);
\draw  (-4,-1)-- (-4,4);
\draw  (2,0)-- (4,2);
\draw  (4,2)-- (2,4);
\draw  (2,4)-- (0,2);
\draw  (0,2)-- (2,0);
\draw [dash pattern=on 3pt off 3pt] (2.46,6)-- (5,3.46);
\draw [dash pattern=on 3pt off 3pt] (1.36,6)-- (5,2.36);
\draw (4.0,-1.7) node[anchor=north west] {\parbox{2.2 cm}{$ \Omega \\  $}};
\draw (-2.84,5.06) node[anchor=north west] {$E_i$};
\draw (-2.56,2.16) node[anchor=north west] {$E_{i+1}$};
\draw (2.22,2.22) node[anchor=north west] {$E_{j_{\rm max}}$};
\draw (1.1,5.08) node[anchor=north west] {$\ell_{i+1}$};
\draw (3.98,5.74) node[anchor=north west] {$\ell_i$};
\end{tikzpicture}
\caption{The two possible cases in the proof of Lemma~\ref{lm_Euler_mult}.}
\end{figure}
\end{center}

We are now able to prove the following result giving the complete characterization of solutions to problem~\eqref{varpb}.

\begin{theorem}\label{th_mcheeg1final}
Let $\Om\subset \R^2$ be convex
and set $\j:=\|u\|_\infty$.
Then one of the following cases holds.
\begin{enumerate}
\item[a)] There exists an $\bar r\in [r_\Om,R_\Om)$ such that $\bar E_j=\bar \Om^{\bar r}$ for all $j\le\j$. In this case
    \[
    u= \j \1_{\Om^{\bar r}}
    \]
    (in particular, if $\bar r=r_\Om$,
    then $u= \j \1_{\Om}$).
\item[b)] There exists an $\bar r\in (r_\Om,R_\Om)$ such that $\bar E_\j= \bar W_{\bar r}(x)$
for some $x\in \Omega$ with $W_{\bar r}(x)\subset \Om^{\bar r}$,
and $\bar E_j=\bar \Om^{\bar r}$ for all $j<\j$.
In this case
    \[
    u= \1_{W_{\bar r}(x)} + (\j-1) \1_{\Om^{\bar r}}.
    \]
\item[c)] There exists an $\bar r\in (0,r_\Om]$ such that
$\bar E_\j= \bar W_{\bar r}(x)$
for some $x\in \Omega$ with $W_{\bar r}(x)\subset \Om$,
and
$\bar E_j=\bar \Om$ for all $j<\j$.
In this case
    \[
    u= \1_{W_{\bar r}(x)} + (\j-1) \1_{\Om}
    \]
    (note that this condition may hold only when $r_\Om>0$).
\item[d)] Every $\bar E_j$ is the closure of a convex union of Wulff balls of radius $R_\Om$ for all $j\le\j$.
\end{enumerate}
\end{theorem}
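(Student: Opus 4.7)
The plan is to combine Propositions~\ref{procon} and~\ref{proball}, which classify each individual level set, with the radius monotonicity extracted from Lemma~\ref{lm_Euler_mult}, and then to organize the surviving configurations into the four cases of the theorem. By Proposition~\ref{procon}, every $E_j$ is a minimizer of the isoperimetric problem~\eqref{isopb} with mass $m_j:=|E_j|$, and Proposition~\ref{proball} places $\bar E_j$ in one of three mutually exclusive regimes: type~(I), a Wulff plaquette $\bar\Omega^{r_{m_j}}$ for $m_j>|\Omega^{R_\Omega}|$; type~(II), the closure of a convex union of Wulff balls of radius $R_\Omega$ for $m_j\in[R_\Omega^2|W_\p|,\,|\Omega^{R_\Omega}|]$; or type~(III), a single Wulff ball $\bar W_{\bar r_j}(x_j)$ of radius $\bar r_j=\sqrt{m_j/|W_\p|}$ for $m_j\le R_\Omega^2|W_\p|$. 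By Remark~\ref{remrem} each connected component of $\partial E_j\cap\Omega$ lies on a Wulff ball of radius $r_j$ equal to $r_{m_j}$, $R_\Omega$ or $\bar r_j$ respectively.

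The decreasing character of $\{|E_j|\}$ together with Lemma~\ref{lm_Euler_mult} imposes $r_i\ge r_j$ whenever $i<j$: given a connected component $S_j\subset\partial E_j\cap\Omega$, the convexity of $E_i\supset E_j$ and a short geometric matching of their Wulff arcs produce $S_i\subset\partial E_i\cap\Omega$ obeying the nesting condition~\eqref{sisj}. This monotonicity excludes a type~(I) level being followed by a type~(II) one (as it would require $r_{m_i}<R_\Omega\le r_j$) and, combined with the obvious volume ordering, leaves only four admissible arrangements in a minimizer: (1)~all $E_j$ of type~(I); (2)~all $E_j$ of type~(II); (3)~$\bar E_\j$ of type~(III) with every $E_j$, $j<\j$, of type~(I); or (4)~$\bar E_\j$ of type~(III) with every $E_j$, $j<\j$, of type~(II). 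Any scenario with several distinct type~(III) levels collapses, by the same monotonicity, to a single repeated Wulff ball, which is strictly sub-optimal when compared with a single enlarged ball or a Wulff plaquette carrying the same total mass, and hence does not occur.

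In arrangement~(1), $r_{m_i}\le r_{m_j}$ (since $v(r):=|\Omega^r|$ is decreasing and $|E_i|\ge|E_j|$) together with $r_{m_i}\ge r_{m_j}$ from the previous paragraph force $r_{m_i}=r_{m_j}$ for all $i,j$, so all the $E_j$ coincide with a single $\bar\Omega^{\bar r}$: this is case~(a). Arrangement~(2) is case~(d). In arrangement~(3) the same argument among the lower levels gives a common plaquette $\bar\Omega^{\bar r'}$, while $\bar E_\j=\bar W_{\bar r}(x)$; Lemma~\ref{lm_Euler_mult} applied across the transition yields $\bar r'\ge\bar r$, and the dual local variation---performing the perturbations from the proof of Lemma~\ref{lm_Euler_mult} with opposite sign, i.e.\ shrinking the plaquette while inflating the top ball---is also admissible and gives $\bar r'\le\bar r$, so $\bar r=\bar r'$. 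If $\bar r>r_\Omega$ this is case~(b); if $\bar r\le r_\Omega$, the identity $\bar\Omega^{\bar r}=\bar\Omega$ (Lemma~\ref{lm_EeqEr}) produces case~(c). Arrangement~(4) reduces to case~(d) by the same equality of effective radii, which forces $\bar r=R_\Omega$ and so places the top ball in the family of convex unions of $W_{R_\Omega}$.

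The main technical hurdles are the geometric matching of boundary arcs that enables Lemma~\ref{lm_Euler_mult} at each transition between consecutive levels, and the verification that the dual perturbation delivering the reverse inequality $\bar r'\le\bar r$ in arrangements~(3) and~(4) is admissible within the class of integer-valued competitors satisfying the prescribed mass constraint.
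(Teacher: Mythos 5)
Your overall strategy (classify each level via Propositions~\ref{procon} and~\ref{proball}, then order the radii via Lemma~\ref{lm_Euler_mult} and enumerate the surviving configurations) is the same as the paper's, and most of your case analysis is sound. The genuine gap is in the one step that actually carries the theorem: ruling out configurations with two or more levels that are single Wulff balls. You assert that ``several distinct type~(III) levels collapse, by the same monotonicity, to a single repeated Wulff ball.'' This is false: Lemma~\ref{lm_Euler_mult} gives $r_i\ge r_{i+1}$, and the volume ordering $|E_i|\ge|E_{i+1}|$ gives, for balls, exactly the \emph{same} inequality $r_i\ge r_{i+1}$. The two constraints point in the same direction and do not force equality, so a tower of balls with strictly decreasing radii is not excluded by anything you have proved. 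Your fallback comparison --- replacing the repeated ball by ``a single enlarged ball or a Wulff plaquette carrying the same total mass'' --- also breaks down: two stacked balls of radius close to $R_\Om$ have combined mass that can exceed $R_\Om^2|W_\p|$ and even $|\Om|$, so the enlarged ball (radius $r\sqrt2$) need not fit in $\Om$ and the single-level competitor need not exist; moreover you are changing the number of levels, which requires a separate energy comparison you have not supplied.

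The paper closes exactly this gap with a different, purely local argument: if $E_{\bar\jmath+1}=\bar W_{r_{\bar\jmath+1}}(x_{\bar\jmath+1})$ and $E_{\j}=\bar W_{r_{\j}}(x_{\j})$ are both free Wulff balls with $r_{\bar\jmath+1}\ge r_{\j}$, one perturbs to radii $r_{\bar\jmath+1}+\eps$ and $r_{\j}-\eps'$ with $\eps'$ chosen so that $r_{\bar\jmath+1}^2+r_{\j}^2$ (hence the total mass) is preserved. Since $P_\p(W_r)$ is proportional to $r$ while $|W_r|$ is proportional to $r^2$, the total perimeter is proportional to the sum of the radii, which strictly decreases --- at first order in $\eps$ when $r_{\bar\jmath+1}>r_{\j}$ and at second order when $r_{\bar\jmath+1}=r_{\j}$ --- contradicting minimality whether or not the two radii coincide, and without ever needing a large competitor to fit inside $\Om$. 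You would need to replace your collapse-and-compare step by an argument of this kind (or otherwise exclude strictly decreasing ball radii) for the proof to go through. Your remaining observations --- e.g.\ the dual perturbation that grows the plaquette and shrinks the top ball to match the radius $\bar r$ in case~b) --- are reasonable and in fact address a point the paper treats only implicitly, but they do not repair the main gap.
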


\begin{remark}
Observe that case~d) of Theorem~\ref{th_mcheeg1final} is the only case
where the number of nonzero level sets of the minimizer may be bigger than two.
\end{remark}

\begin{proof}
We may assume $\j>1$, since otherwise the result follows directly from
Proposition~\ref{proball}.

By Remark~\ref{remrem}, for all $i\le \j$ the set $E_i$ is convex
and each connected component of
$\partial E_i\cap \Omega$ is contained, up to a translation, in
$\partial W_{r_i}(x_i)$ for some $r_i>0$, $x_i\in \Omega$. Moreover,
if $\partial E_i\cap \Omega$ and $\partial E_{i+1}\cap \Omega$
are nonempty, from the inclusion $E_{i+1}\subset E_i$ it follows that
we can always find two connected components $S_i\subset \partial E_i\cap \Omega$
and $S_{i+1}\subset \partial E_{i+1}\cap \Omega$ satisfying the assumptions of Lemma~\ref{lm_Euler_mult}.
By Lemma~\ref{lm_Euler_mult} we then get $r_i\geq r_{i+1}$ for all $i<\j$.

Recalling Propositions~\ref{procon} and \ref{proball}, this leaves only the following possibilities:
\begin{itemize}
  \item[(i)] $\bar E_i=\bar \Om^{r_i}$ and $\bar E_{i+1}=\bar \Om^{r_{i+1}}$,
          with $r_{i}\geq r_{i+1}$. In this case, if $r_i>r_\Om$ so that $\bar \Om^{r_i} \neq\bar \Om$, then
          $r_{i}= r_{i+1}$, hence $\bar E_i=\bar E_{i+1}=\bar \Om^{r_i}$,
          while if $r_{i+1}\leq r_i\leq r_\Om$ we have $\bar E_i=\bar E_{i+1}=\bar \Om$, and we may just set
          $r_{i}= r_{i+1}:=r_\Om$ so that we still have $\bar E_i=\bar E_{i+1}=\bar \Om^{r_i}=\bar\Om$.
  \item[(ii)] $\bar E_i=\bar \Om^{r_i}$ and $\bar E_{i+1}=\bar W_{r_{i+1}}(x_{i+1})$, with $r_{i}\geq r_{i+1}$.
  \item[(iii)] $\bar E_i=\bar W_{r_i}(x_i)$ and $\bar E_{i+1}=\bar W_{r_{i+1}}(x_{i+1})$, with $r_{i}\geq r_{i+1}$.
  \item[(iv)] $\bar E_i$ is a closure of a convex union of Wulff shapes
  of radius $R_\Om$ and $E_{i+1}=\bar W_{r_{i+1}}(x_{i+1})$, with $R_\Om> r_{i+1}$
  (note that the case where $\bar E_{i+1}$ is the closure of a convex union of Wulff balls of radius $R_\Om$ and $\bar E_{i}=\bar W_{r_{i}}(x_{i})$ with $R_\Om< r_{i}$ is impossible).
  \item[(v)] $\bar E_i$ and $\bar  E_{i+1}$ are both the closure of a convex union of Wulff shapes of radius $R_\Om$.
\end{itemize}
Thus there is $\bar\jmath\in \{0,\ldots, \j\}$ and $\bar r\in [r_\Om, R_\Om)$ such that, for every $i\leq\bar\jmath$,
  \begin{itemize}
  \item[(A)]
  either $\bar E_i=\bar \Om^{\bar r}$, and
  $\bar E_i=\bar W_{r_i}(x_{i})$ with $r_i< \bar r$ for all $i>\bar\jmath$;
  \item[(B)]
  or $\bar E_i$ is the closure of a convex union of Wulff shapes of radius $R_\Om$,
  and  $\bar E_i=\bar W_{r_i}(x_{i})$ with $r_i< R_\Om$ for all $i>\bar\jmath$.
  \end{itemize}

We want to show that $\bar\jmath= \j-1$ or $\bar\jmath =\j$.

Assume by contradiction that $\j\ge \bar\jmath+2$.
As $r_{\bar\jmath+1}< \bar r$ in Case~A, and $r_i<R_\Om$ in Case~B,
without loss of generality we can assume that $\bar E_{\bar\jmath+1}= \bar W_{r_{\bar\jmath+1}}(x_{\bar\jmath+1})$
is contained in the interior of $\bar E_{\bar\jmath}$. Then, we can
choose $\varepsilon,\varepsilon'>0$ sufficiently small so that, letting
$r_{\j}':=r_{\j}-\varepsilon'$ and
$r_{\bar\jmath+1}' := r_{\bar\jmath+1}+\varepsilon$, we have $\bar W_{r_{\bar\jmath+1}}(x_{\bar\jmath+1}) \subset \bar E_{\j}$, and
\begin{equation}\label{eq_rjk2}
{r'}_{\j}^{2}+ {r'}_{\bar\jmath+1}^{2} =r_{\j}^2+ r_{\bar\jmath+1}^2.
\end{equation}
{}From~\eqref{eq_rjk2} we then get
\[
\varepsilon'=\frac{r_{\bar\jmath+1}}{r_{\j}}\varepsilon + o(\varepsilon),
\]
and hence
\begin{eqnarray*}
\frac{P_\p (W_{r'_{\bar\jmath+1}}(x_{\bar\jmath+1}))
+ P_\p (W_{r'_{\j}}(x_{\j}))}
{P_\p(W_{r_{\bar\jmath+1}}(x_{\bar\jmath+1}))
+ P_\p (W_{r_{\j}}(x_{\j}))} &=& \frac{r'_{\bar\jmath+1}+r'_{\j}}{r_{\bar\jmath+1}+r_{\j}}
\\
&=&
1-\varepsilon \frac{r_{\bar\jmath+1}-r_{\j}}{r_{\j}(r_{\bar\jmath+1}+r_\j)} +o(\varepsilon),
\end{eqnarray*}
where the error term $o(\varepsilon)$ is negative whenever $r_{\bar\jmath+1}=r_\j$.
Representing $u$ as
$u= \tilde u+ \1_{E_{\bar\jmath+1}}+ \1_{E_{\j}}$,
and letting
\[
u'_\varepsilon:= \tilde u+ \1_{W_{r'_{\bar\jmath+1}}(x_{\bar\jmath+1})}+ \1_{W_{r'_{\j}}(x_{\j})},
\]
we then get
\[
\int_\Omega \p^*(Du'_\varepsilon) < \int_\Omega \p^*(Du),
\]
for sufficiently small $\varepsilon>0$.
Since $\int_{\R^2} u\, dx = \int_{\R^2} u'_\varepsilon\, dx$, this contradicts the minimality of $u$.
One has therefore that $\bar\jmath= \j-1$ or $\bar\jmath =\j$, which concludes the proof.
\end{proof}

\section{An explicit example}\label{secexa}
\subsection{A square with the Euclidean norm}

Let now $\Om:=[0,1]^2$ and let $\p$ be the Euclidean norm on $\R^2$.
From Theorem~\ref{th_mcheeg1final} we obtain the following characterization for the minimizers of \eqref{varpb}.

\begin{proposition}\label{prosq}
Let $\Om$ and $\p$ be as above.
\begin{itemize}
\item[(i)] If $m\in (n-1,n\pi/4)$, with $1\le n\le 4$, we have $\j=n$, $\bar E_\j=\bar B_r(x_0)\subset \bar\Om$
and $\bar E_j=\bar \Om^r$ for $j<\j$, with
\[
r=\sqrt{(n-m-1)/(4(n-1)-n\pi)}
\]
\item[(ii)] If $m\in [n\pi/4,n]$, with $1\le n\le 4$, we have $\j=n$ and $\bar E_j=\bar \Om^r$ for $j\le\j$, with $r=\sqrt{(n-m)/((4-\pi)n)}$.
\item[(iii)] If $m> 4$ we have
\begin{equation}\label{eqAB}
\j\in\left\{
\left\lfloor \frac{2+\sqrt\pi}{2\sqrt\pi}m\right\rfloor ,
\left\lfloor \frac{2+\sqrt\pi}{2\sqrt\pi}m\right\rfloor +1
\right\}
\end{equation}
and $\bar E_j=\bar \Om^r$ for $j\le\j$, with $r=\sqrt{(1-m/\j)/(4-\pi)}$.
\end{itemize}
\end{proposition}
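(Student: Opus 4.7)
The plan is to specialize Theorem~\ref{th_mcheeg1final} to $\Omega=[0,1]^2$ with the Euclidean norm, computing the relevant quantities in closed form and identifying, for each $m$, which case of the theorem realizes the minimum. For the unit square one has $R_\Omega=1/2$, $r_\Omega=0$, $|\Omega|=1$, $|W_\varphi|=\pi$; in particular, case~(c) of the theorem is ruled out by $r_\Omega=0$, and case~(d) reduces to $u=\j\1_{B_{1/2}(1/2,1/2)}$ with $m=\j\pi/4$, feasible only for isolated values of $m$. The generic minimizer is therefore of the form in case~(a) or~(b), parametrized by $(\j,r)$ with $r\in[0,1/2)$.

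Writing the rounded square $\Omega^r$ as the Minkowski sum $[r,1-r]^2\oplus B_r$ yields
\[
|\Omega^r|=1-(4-\pi)r^2,\qquad P(\Omega^r)=4-2(4-\pi)r,
\]
together with $|B_r|=\pi r^2$, $P(B_r)=2\pi r$. Substituting into the volume constraint $\int_\Omega u=m$ in case~(a) with $\j=n$ gives $r=\sqrt{(n-m)/(n(4-\pi))}$, admissible exactly for $m\in[n\pi/4,n]$, which produces~(ii) (the endpoint $m=n\pi/4$ coinciding with case~(d) at $\j=n$). In case~(b) with $\j=n$, the constraint $\pi r^2+(n-1)(1-(4-\pi)r^2)=m$ gives the formula in~(i); admissibility $r\in(0,1/2)$ together with positivity of the denominator $n\pi-4(n-1)$ forces $m\in(n-1,n\pi/4)$ and $n\in\{1,2,3,4\}$.

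The decisive algebraic step is the energy comparison of case~(a) vs.\ case~(b) at the same $\j=n$: whenever both are admissible (which happens e.g.\ for $n\geq 5$ and $m\in(n\pi/4,n-1)$), the volume identity $n(4-\pi)\,r_a^2+(4-n(4-\pi))\,r_b^2=1$ leads, after a short rearrangement, to
\[
P_a-P_b=\frac{2(1-2r_b)(2r_a-1)}{r_a+r_b}<0\qquad(r_a,r_b\in(0,1/2)),
\]
so case~(a) strictly dominates. Combined with the admissibility ranges this pins down the minimizer in~(i) and~(ii) as claimed, and also rules out case~(b) in the remaining regime $m>4$.

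For~(iii) we are therefore restricted to case~(a), and the optimal $\j$ minimizes
\[
P(j)=4j-2\sqrt{(4-\pi)\,j(j-m)}
\]
over integers $j\in[\lceil m\rceil,\,\lceil 4m/\pi\rceil-1]$. A direct computation gives $P''(j)=\tfrac{1}{2}m^2(4-\pi)^{1/2}[j(j-m)]^{-3/2}>0$, so $P$ is strictly convex with unique real-valued minimizer $j^\ast=(2+\sqrt\pi)\,m/(2\sqrt\pi)$ (from solving $4\pi j^2-4\pi jm-(4-\pi)m^2=0$); convexity then forces the integer minimum to be $\lfloor j^\ast\rfloor$ or $\lfloor j^\ast\rfloor+1$, establishing~\eqref{eqAB}, while $r(j^\ast)=1/(2+\sqrt\pi)<1/2$ guarantees admissibility at the adjacent integers. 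The main technical obstacle is the identity $P_a-P_b=2(1-2r_b)(2r_a-1)/(r_a+r_b)$; the remainder is elementary bookkeeping of admissibility ranges and a routine one-variable convex optimization.
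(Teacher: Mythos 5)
Your overall strategy coincides with the paper's (specialize Theorem~\ref{th_mcheeg1final} to the square, reduce to the two one-parameter families ``all levels equal to $\Om^r$'' and ``one Wulff ball on top of identical plaquettes'', then optimize over the number of levels), but the decisive comparison between the two families is carried out by a genuinely different and cleaner device. The paper optimizes each family separately over $\j$, obtaining $\j\in\{j^A,j^A+1\}$ resp.\ $\{j^B,j^B+1\}$ from the sign of $F_A'$, $F_B'$, and then compares $\min\{F_A(j^A),F_A(j^A+1)\}$ with $\min\{F_B(j^B),F_B(j^B+1)\}$ by an unspecified ``tedious calculation''. You instead compare the two families at the \emph{same} number of levels: your identity is correct, since the volume constraint gives $n(4-\pi)(r_a^2-r_b^2)=1-4r_b^2$ and hence
\[
P_a-P_b=4(1-2r_b)-2n(4-\pi)(r_a-r_b)=\frac{2(1-2r_b)(2r_a-1)}{r_a+r_b}<0
\]
for $r_a,r_b\in(0,1/2)$. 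Moreover, for $\j\ge 5$ the admissibility interval of the ball-on-top family at level count $\j$, namely $(\j\pi/4,\j-1)$, is contained in that of the pure-plaquette family, $[\j\pi/4,\j]$, so every admissible case-(b) competitor for $m>4$ is beaten by a case-(a) competitor with the same $\j$; this disposes of case~(b) in regime~(iii) more transparently than the paper does. The convexity argument for $F_A$ in part~(iii) is also correct and equivalent to the paper's monotonicity analysis.

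The gap is in your treatment of (i) and (ii), where you assert that ``admissibility ranges'' alone pin down the minimizer. For (i), and for (ii) with $n\le 3$, this does work, but only after checking what you leave implicit: that for $m\in(n-1,n\pi/4)$ no pure-plaquette configuration is admissible at \emph{any} level count (each interval $[\j\pi/4,\j]$ misses $(n-1,n\pi/4)$), and symmetrically that no ball-on-top configuration competes in the ranges of (ii). For $n=4$ the argument genuinely breaks: the case-(a) admissibility intervals $[\pi,4]$ and $[5\pi/4,5]$ overlap on $[5\pi/4,4]$, so there both $\j=4$ and $\j=5$ are admissible within case~(a) and an extra comparison of $F_A(4)$ with $F_A(5)$ is unavoidable; your convex-optimization step is invoked only for $m>4$ and does not cover this. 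This is not mere bookkeeping: at $m=4$ one has $F_A(4)=16$ while $F_A(5)=20-2\sqrt{4-\pi}\,\sqrt{5}\approx 15.86$, so five levels beat four and the asserted conclusion $\j=n=4$ actually fails for $m$ sufficiently close to $4$. The paper's own proof elides the same point, so this is arguably a defect of the statement near $m=4$ rather than of your method alone, but as written your proof does not establish (ii) for $n=4$.
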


\begin{proof}
Clearly, $r_\Omega=0$, $R_\Omega=1/2$.
By Theorem~\ref{th_mcheeg1final} for all $m>0$ we have one of the following two possibilities.

{\sc Case A.} $\bar E_j=\bar \Om^r$ for all $j\le \j$ with
\[
m=\j|\Om^r|=\j\left( 1-(4-\pi)r^2\right)
\qquad r\in[0,1/2].
\]
It then follows
\[
r=r_A(\j) := \sqrt{\frac{\j-m}{(4-\pi)\j}}
\]
and $\sum_{j=1}^\j P(E_j)=F_A(\j)$, where
\[
F_A(x) := x P(\Om^{r_A(x)}) = 4x-2\sqrt{4-\pi}\,\sqrt{x(x-m)}.
\]
Notice that
\[
F_A'(x)= 4 - \sqrt{4-\pi}\,\frac{2x-m}{\sqrt{x(x-m)}}
\]
which implies that $F_A(x)$ is increasing for $x>\frac{2+\sqrt\pi}{2\sqrt\pi}m$ and decreasing for
$m\le x<\frac{2+\sqrt\pi}{2\sqrt\pi}m$. As a consequence we have
\begin{equation*}
\j\in
\{j^A, j^A+1\},
\mbox{ where }
j^A:=
\left\lfloor \frac{2+\sqrt\pi}{2\sqrt\pi}m\right\rfloor .
\end{equation*}

{\sc Case B.} $\bar E_\j=\bar B_r(x_0)\subset \bar\Om$ and $\bar E_j=\bar \Om^r$ for all $j< \j$ with
\begin{align*}
m &=\pi r^2+(\j-1)|\Om^r|\\
  & =(\j-1)\left( 1-\left(4-\frac{\j}{\j-1}\pi\right)r^2\right)\qquad r\in(0,1/2).
\end{align*}
It follows that 
\[
r=r_B(\j) := \sqrt{\frac{\j-1-m}{(4-\pi)(\j-1)-\pi}}
\]
and $\sum_{j=1}^\j P(E_j)=F_B(\j)$, where
\begin{eqnarray*}
F_B(x) &:=& (x-1) P(\Om^{r_B(x)}) + 2\pi r_B(x)
\\
&=& 4(x-1)-2\sqrt{4-\pi}\,\sqrt{\left(x-\frac{4}{4-\pi}\right)(x-1-m)}.
\end{eqnarray*}
Notice that the derivative
\[
F_B'(x)= 4 - \sqrt{4-\pi}\,\frac{2(x-1)-m-\dfrac{\pi}{4-\pi}}{\sqrt{\left(x-\dfrac{4}{4-\pi}\right)(x-1-m)}}.
\]
Assuming $x\ge \frac{4}{4-\pi}$, we then have
that $F_B$ is increasing for
\[
x>\frac{2+\sqrt\pi}{2\sqrt\pi} m+\frac{4+\sqrt\pi}{2(2+\sqrt\pi)}
\]
and decreasing otherwise, so that
\begin{equation*}
\j\in \{j^B, j^B+1\},
\mbox{ where }
j^B:=
\left\lfloor \frac{2+\sqrt\pi}{2\sqrt\pi} m+\frac{4+\sqrt\pi}{2(2+\sqrt\pi)}\right\rfloor,
\end{equation*}
as soon as $\j\ge 5$.

Observe that, if $\pi \leq m < 5\pi/4$, then 
Case~B cannot occur. In fact, supposing the contrary, we would have
$\j\in \{j^B, j^B+1\}=\{4,5\}$, while
one should have $4- \pi \j/(\j-1) \geq 0$, i.e.\ $\j\geq \lfloor 4/(4-\pi)\rfloor=5$
which implies $\j=5$. On the other hand either
\begin{itemize}
\item[(1)] $\j-1\geq \pi/(4-\pi)$ with $\j\geq m+1$, which would mean $\j\geq 6$, so that this case is impossible, or 
\item[(2)] $\j-1<  \pi/(4-\pi)$ with $\j\leq m+1$, which means $\j=4$.
\end{itemize}
This gives the contradiction proving that Case~B does not occur.

If $m>5\pi/4$, we have to determine which one between Case A and B is energetically more convenient.
By a tedious calculation one proves that
$\min\{F_B(j^B),F_B(j^B+1)\} >\min\{F_A(j^A),F_A(j^A+1)\}$,
which shows that Case B still cannot occur. This shows claim~(iii) of the statement being proven.

Finally, with $m< \pi$ one can only have $\j\in \{1,2,3,4\}$. It is easy to verify then 
that when $m\in (n-1,n\pi/4)$ and $j=n\in \{1,2,3,4\}$, one has $F_B(j)<F_A(j)$, so that Case~B occurs, thus proving claim~(i), while
when $m\in [n\pi/4,n]$ and $j=n\in \{1,2,3,4\}$, one has $F_A(j)\leq F_B(j)$, and hence Case~A occurs, thus proving claim~(ii).
\end{proof}

Notice that $\frac{2\pi}{2+\pi}$ is the volume of the (unique) Cheeger set $C_\Om$ of $\Om$,
so that Proposition~\ref{prosq} implies that the functions $u_m/\j$ converge to the characteristic function of $C_\Om$,
according to the Remark~\ref{remlim}.

\subsection{A square with a crystalline norm}

Now we set $\Om=[0,1]^2$ as above and $\p(\nu)=|\nu_1|+|\nu_2|$. Notice that
$\p$ is a crystalline norm with Wulff shape $W_\p=\{(x, y)\in\R^2:\,|x|+|y|< 1\}$.
As before, we are able to characterize completely the minimizers of~\eqref{varpb}.

\begin{proposition}
Let $\Om$ and $\p$ be as above.
\begin{itemize}
\item[(i)] If $m\in (0,1/2]$, we have $\j=1$ and $\bar E_1=\bar W_r(x_0)\subset \bar\Om$, with $r=\sqrt{m/2}$.
\item[(ii)] If $m\in [1/2,1)$, we have $\j=1$ and $\bar E_1=\bar\Om^r$, with $r=\sqrt{(1-m)/2}$.
\item[(iii)] If $m=1$, then either $\j=1$ and $\bar E_1=\Om$, or $\j=2$, $\bar E_1=\bar \Om^r$ and $\bar E_2=\bar W_r(x_0)\subset\Om$,
with $r\in (0,1/2]$.
\item[(iv)] If $m>1$, we have 
\begin{equation}\label{eqCD}
\j\in\left\{
\left\lfloor \frac{1+\sqrt 2}{2}m\right\rfloor ,
\left\lfloor \frac{1+\sqrt 2}{2}m\right\rfloor +1
\right\}
\end{equation}
and $\bar E_j=\bar \Om^r$ for $j\le\j$, with $r=\sqrt{(1-m/\j)/2}$.
\end{itemize}
\end{proposition}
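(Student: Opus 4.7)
The plan is to apply Theorem~\ref{th_mcheeg1final} to reduce the list of candidate minimizers to a small explicit family, and then to carry out a direct energy comparison. First I record the geometric data for the pair $(\Om,\p)$: the Wulff shape $W_\p$ is the diamond with vertices $(\pm1,0)$ and $(0,\pm1)$, so $|W_\p|=2$; the unique largest Wulff ball inside $\Om$ is $W_{1/2}((1/2,1/2))$, hence $R_\Om=1/2$ and $\Om^{R_\Om}=W_{1/2}((1/2,1/2))$; finally $r_\Om=0$, since at each corner of the square the orientation of the corners of $W_\p$ prevents any Wulff ball from touching the corner from inside $\Om$. Writing $\Om^r=[r,1-r]^2+W_r$ one computes $|\Om^r|=1-2r^2$, $P_\p(\Om^r)=4-4r$, $|W_r|=2r^2$ and $P_\p(W_r)=4r$ for $r\in[0,1/2]$.

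Theorem~\ref{th_mcheeg1final} then leaves only two structural possibilities for a minimizer: case~(c) is excluded by $r_\Om=0$, and case~(d) collapses to case~(a) with $\bar r=R_\Om$ because the only Wulff ball of radius $R_\Om$ in $\Om$ is $\Om^{R_\Om}$. I label the remaining alternatives (A): $u=\j\1_{\Om^{\bar r}}$ with $\bar r\in[0,1/2]$, for which $m=\j(1-2\bar r^2)$ and $\int_\Om\p^*(Du)=4\j(1-\bar r)$; and (B): $u=\1_{W_{\bar r}(x_0)}+(\j-1)\1_{\Om^{\bar r}}$ with $\bar r\in(0,1/2]$ and $\j\ge2$, for which $m=(\j-1)+2(2-\j)\bar r^2$ and $\int_\Om\p^*(Du)=4(\j-1)-4(\j-2)\bar r$. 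Notice that (B) with $\j=2$ forces $m=1$ and yields energy $4$ for every $\bar r\in(0,1/2]$. For $\j\ge3$, solving for $\bar r$ in (B) and comparing with (A) at level count $\j-1$ for the same $m\le\j-1$ gives the identity $F_B-F_A(\j-1)=2\sqrt{2(\j-1-m)}\,(\sqrt{\j-1}-\sqrt{\j-2})>0$ whenever $m<\j-1$, so (A) with $\j-1$ levels is strictly better than (B) with $\j$ levels.

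It therefore remains to minimize $F_A(\j)=4\j-2\sqrt{2\j(\j-m)}$ over integers $\j\ge\lceil m\rceil$; differentiation shows that the unique critical point on $[m,\infty)$ is $\j^*=m(1+\sqrt 2)/2$ and is a minimum, so the integer optimum is $\lfloor\j^*\rfloor$ or $\lfloor\j^*\rfloor+1$, yielding item~(iv) once $m>1$. The remaining items are assembled by feasibility: for $m\in(0,1/2]$ case~(A) with $\j=1$ requires $\bar r\ge1/2$ and degenerates to the single Wulff ball of item~(i); for $m\in[1/2,1)$ case~(A) with $\j=1$ gives the plaquette of item~(ii), while no case~(B) configuration is admissible; and at $m=1$ case~(A) with $\j=1$, $\bar r=0$ and case~(B) with $\j=2$ and any $\bar r\in(0,1/2]$ both have energy exactly $4$, producing the one-parameter family of item~(iii). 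The main subtlety is keeping track of the boundary in the (A)--(B) comparison: the inequality $F_B>F_A(\j-1)$ is strict precisely when $m<\j-1$, and the equality case $m=1$, $\j=2$ is exactly what generates the non-uniqueness in item~(iii); once this careful comparison is in place, the remaining steps reduce to elementary single-variable calculus on $F_A$ and bookkeeping of the admissible ranges of $\bar r$.
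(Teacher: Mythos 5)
Your argument is correct and follows the same skeleton as the paper's proof: use Theorem~\ref{th_mcheeg1final} to reduce to the two structural alternatives (all levels equal to a plaquette $\Om^{\bar r}$, or a Wulff ball sitting on top of identical plaquettes), compute the energies $F_A$, $F_B$ as functions of the number of levels, and optimize over integers. Your geometric data ($|W_\p|=2$, $P_\p(W_r)=4r$, $|\Om^r|=1-2r^2$, $P_\p(\Om^r)=4-4r$, $r_\Om=0$, $R_\Om=1/2$, and the collapse of case~d) of the theorem to a single Wulff ball) and the resulting expressions for $F_A$ and $F_B$ all agree with the paper's. The one place where you genuinely diverge is the exclusion of the ``ball on top'' configuration for $m>1$: the paper compares the two integer optima, asserting $\min\{F_B(j^B),F_B(j^B+1)\}>\min\{F_A(j^A),F_A(j^A+1)\}$ ``by a tedious calculation,'' whereas you produce the closed-form identity $F_B(j)-F_A(j-1)=2\sqrt{2(j-1-m)}\,\bigl(\sqrt{j-1}-\sqrt{j-2}\bigr)$ for $j\ge 3$, so that every type~B configuration is dominated by a type~A configuration with one fewer level, with strict inequality exactly when $m<j-1$. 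This is cleaner than the paper's route: it replaces the unspecified calculation by a one-line algebraic domination, and it makes the two borderline phenomena transparent, namely the degenerate equality case $m=j-1$ (where $\bar r=0$ and type~B collapses into type~A) and the $j=2$ tie at $m=1$ that produces the one-parameter family of minimizers in item~(iii). The only detail you leave implicit is that the comparison configuration $F_A(j-1)$ is itself admissible, i.e.\ that its radius $\sqrt{(j-1-m)/(2(j-1))}$ does not exceed $1/2$; this follows at once from the feasibility constraint $j\le 2m$ of the type~B configuration you started from, so the argument is complete as written.
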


\begin{proof}
The proof is similar to the one of Proposition~\ref{prosq}.

Clearly, if $m\le 1/2$, then $\j=1$ and $\bar E_1=\bar W_r(x_0)\subset \Om $, since the (rescaled) Wulff shape solves the isoperimetric problem.
By Theorem~\ref{th_mcheeg1final}, for all $m\ge 1/2$ we have one of the following two possibilities.

{\sc Case A.} $\bar E_j=\bar \Om^r$ for all $j\le \j$ with
\[
m=\j|\Om^r|= \j\left( 1-2r^2\right)\qquad r\in[0,1/2],
\]
which gives
\[
r=\frac{1}{\sqrt 2}\sqrt{1-\frac{m}{\j}},
\]
and $\sum_{j=1}^\j P_\p(E_j) = F_A(\j)$, where
\[
F_A(x) = x(4-4r)= 4x-4\sqrt\frac{x^2-mx}{2}\,.
\]
Since the function $F_A$ is increasing for
$x>\frac{1+\sqrt 2}{2}m$ and decreasing for $m\le x<\frac{1+\sqrt 2}{2}m$,
we have
\begin{equation*}
\j\in
\{j^A, j^A+1\},
\mbox{ where }
j^A:=
\left\lfloor \frac{1+\sqrt 2}{2} m\right\rfloor.
\end{equation*}

{\sc Case B.} $\bar E_\j=\bar W_r(x_0)\subset \bar\Om$ and $\bar E_j=\bar\Om^r$ for all $j< \j$,
with $r\in (0,1/2]$ and
\[
m =2r^2+(\j-1)\left( 1-2r^2\right),
\]
and hence $m\geq 1$ because $\j\geq 2$ and $r\leq 1/2$.

If $m=1$ then $\j=2$ and we can take any $r\in (0,1/2]$.

If $m>1$ then $\j\ge m+1$ and we get
\[
r=\sqrt\frac{\j-m-1}{2(\j-2)}
\]
and $\sum_{j=1}^\j P_\p(E_j)=F_B(\j)$, where
\[
F_B(x)=4(x-1)-4(x-2)\sqrt\frac{x-m-1}{2(x-2)}\,.
\]
Since the function $F_B$ is increasing for $x>\frac{1+\sqrt 2}{2}m + \frac{3-\sqrt 2}{2}$ and decreasing otherwise,
we have
\begin{equation*}
\j\in \{j^B, j^B+1\},
\mbox{ where }
j^B:=
\left\lfloor \frac{1+\sqrt 2}{2} m+\frac{3-\sqrt 2}{2}\right\rfloor.
\end{equation*}

As in the proof of Proposition~\ref{prosq}, when $m>1$ we have to determine which one between Cases~A 
and~B is energetically more convenient.
Since 
$\min\{F_B(j^B),F_B(j^B+1)\} >\min\{F_A(j^A),F_A(j^A+1)\}$
(again, by a tedious calculation like in the example with Euclidean norm),
it follows that Case~B can never occur.
\end{proof}


\end{document}